%
%
%
%
%
\RequirePackage{fix-cm}
\documentclass[smallextended]{svjour3}       
\smartqed  
\usepackage{amsmath}
\usepackage{amssymb}
\usepackage{tabularx}
\usepackage{comment}
\usepackage{relsize}
\usepackage{enumerate}
\usepackage{tikz}
\usetikzlibrary{positioning}
\usepackage{tikz-cd}
\usepackage{color}
\usepackage{tabularx}
\usepackage{ragged2e}
\usepackage{enumitem}
\newcolumntype{C}{>{\Centering\arraybackslash}X}
\DeclareMathAlphabet\mathfrak{U}{euf}{m}{n}
\SetMathAlphabet\mathfrak{bold}{U}{euf}{b}{n}
\usepackage[rightcaption]{sidecap}
\sidecaptionvpos{figure}{c}
\usepackage{scrextend}
\usepackage{centernot}
%

\newcommand{\Z}{\mathbb{Z}}
\newcommand{\F}{\mathbb{F}}

\newcommand{\N}{\mathbb{N}}

\newcommand{\mbb}[1]{\mathbb{#1}}

\newcommand{\mcal}[1]{\mathcal{#1}}
\newcommand{\txt}[1]{\textrm{#1}}
\newcommand{\cb}[1]{ \big \{ #1 \big \} }
\newcommand{\ncb}[1]{ \left \{ #1 \right \} }
\newcommand{\cp}[1]{ \big ( #1 \big ) }

\newcommand{\es}{\enskip}

\newcommand{\ba}[2]{\left [\begin{array}{#1} #2 \end{array}\right ]}

\newcommand\restr[2]{{\left.\kern-\nulldelimiterspace#1 \vphantom{\big|}\right|_{{#2}}}}

%
\begin{document}

\title{Ramsey-Theoretic Characterizations of Classically Non-Ramseyian Problems
}


\author{
Bryce Alan Christopherson         
}


\institute{B. Christopherson \at
              Witmer Hall Room 313\\
              101 Cornell Street Stop 8376\\
              Grand Forks ND 58202-8376 \\
              Tel.: +1-701-777-2881\\
              \email{bryce.christopherson@UND.edu}
}

\date{}

\maketitle

\begin{abstract}
    In this paper, we will develop a \textit{significantly} more general notion of classical Ramsey numbers (extending most other graph-theoretic generalizations) and make some preliminary characterizations of these new Ramsey numbers using simple algebraic tools.  Throughout, we make a case arguing that, while our access to {\em specific values} of Ramsey numbers (or, in general, precise numerical solutions to Ramsey-theoretic problems) may be limited, the {\em interplay between} and {\em overall structure of} Ramseyian objects is likely tractable.  To support the relevancy of this perspective, we conclude by demonstrating that the Green-Tao Theorem, the Twin Prime conjecture, Zhang's bounded prime gap theorem, and Polignac's conjecture can be viewed as statements about Ramsey numbers.
    
    \keywords{Ramsey theory \and Ramsey numbers \and Green Tao theorem \and Twin prime conjecture \and Polignac's conjecture}
    \subclass{05C55 \and 05D10 \and 11A41 \and 11N05 \and 11N13 \and 11N32}
\end{abstract}

\section{Introduction}

Ramsey theory addresses the mathematical phenomenon which seems to dictate that the {\em local} structure of certain ordered collections of objects often necessarily becomes more regular beyond a certain bifurcation point, provided there is a suitable degree of {\em global} structure present in the ordered collection of objects themselves.  Usually, a generic Ramsey-theoretic problem seeks to determine the precise value at which this bifurcation occurs.  There are many notions of Ramsey properties (see, e.g., \cite{gould2010ramsey,graham2008old,graham2007some}).  The most well known are the classical Ramsey numbers.

\begin{definition}[Classical Ramsey Numbers]
    Let $K_n$ denote the undirected complete graph on $n$ vertices, and let $z_1,\hdots,z_m$ be positive integers.  The \textit{Ramsey number} $R(z_1,\hdots,z_{m})$ is  the smallest $n$ such that, for any edge coloring of $K_n$ with $m$ colors, there exists a subgraph of $K_n$ isomorphic to $K_{z_i}$ that is monochromatic in the $i^{th}$ color for some $i \in \cb{1,\hdots,m}$.
\end{definition}


There are a number of different notions extending and generalizing classical Ramsey numbers \cite{burr1974generalized,harary2006recent,chvatal1972generalized,conlon2015erdHos,burr1980extremal}.  In this paper, a more general (perhaps too general) extension related to what was initially proposed in \cite{christopherson} will be considered that covers if not all, then most all, variants that have been considered. 

 This generalization, though moderately cumbersome to work with and perhaps not ideal in its technical formulation, does offer the useful ability to `see everything at once' in a single context.  Instead of proposing this generalization as a good way to approach these problems, its purpose is instead to allow us a better vantage to take stock of them, see their essential structure, and gain some insight as to what a `correct' overarching formulation might be.  
 
 We will approach things using some basic tools to convert the general problem into a digestible algebraic one that more clearly allows us to ascertain the degree and nature of the difficulties that can arise.  The approach, started in \cite{christopherson} and similar to later approaches such as \cite{dealgebraic}, is basically the following: 
 
 \begin{enumerate}
    \item Sets $S_{i}$ of admissible edge-colorings of the base graphs $G_{i}$ are associated to a set of points $\mcal{O}_i$.
    \item  The base graphs $G_i$ and target graphs $\mbb{X}_i=\left \{ X_j\right \}_{{j \in J_i}}$ with their target edge-colorings $C_i = \left \{\psi_j\right \}_{{j\in J_i}}$ are used to generate an ideal $I_i$ in a polynomial ring.
    \item  The members of $I_i$ vanish on a set of points corresponding to edge-colorings of $G_i$ for which there is a restriction to a subgraph isomorphic to $X_j$ whose coloring coincides with $\psi_j$ for some $j \in J_i$.
    \item The admissible edge-colorings of $G_i$ restrict to target colorings on subgraphs isomorphic to target graphs if and only if the algebraic set $V(I_i)$ of points on which all members of $I_i$ vanish contains $\mcal{O}_i$.
    \item The element $n\in I$ (if such a value exists) of least order at which $V(I_t)\subseteq \mcal{O}_t$ for all $n \leq t \in I$ coincides with the value of the Ramsey number $R_{(\mbb{G},\mcal{S})}(\mbb{X},\mcal{C}) = n$.
\end{enumerate}

The structure of this paper can be summarized as follows:  In Section~\ref{sec1}, we begin by giving an alternate, equivalent formulation of classical Ramsey numbers and a definition of the new version we introduce, as well as some preliminary tools and results to build familiarity with these objects.  In Section~\ref{sec2}, we focus on a particular type of Ramsey numbers and highlight a straightforward connection between these numbers and the zeros of certain polynomials.  Using this connection, we are able to provide some helpful characterizations.  In Section~\ref{sec3}, we employ a small trick so we can apply the results of the previous sections a much larger class of Ramsey numbers.  In Section~\ref{sec4}, we give an example of a few basic `structural' results for Ramsey numbers.  In Section~\ref{sec6}, we provide some alternative, Ramsey-theoretic formulations for a few well-known problems from other mathematical fields.  In Section~\ref{sec7}, we make some closing remarks and summarize.

For the purposes of brevity, we will frequently make use of standard graph-theoretic, algebraic, and algebro-geometric terms and results without much introductory explanation throughout the paper -- for common references, see, e.g., \cite{godsil2001algebraic,bollobás1998modern,harris2013algebraic,eisenbud1995commutative,kemper2010course,isaacs2009algebra}.  Given a prime power $q=p^k$, we will adopt the convention of denoting the finite field of order $q$ by $\F_q$. 

\section{Preliminary Work}\label{sec1}

In this section, we will develop an alternative formulation of classical Ramsey numbers, as well as a generalization of classical Ramsey numbers which emerges naturally in this context.  To begin, we define a graph coloring.

\begin{definition}[Graph Coloring]\label{graph coloring} \ 
    Let $G$ be a finite, undirected graph and let $E(G)$ denote the edge set of $G$.  Given a set $A$ of size $m$, we say a function $\rho:E(G)\rightarrow A$ is an {\em $A$-valued $m$-edge-coloring of $G$} and denote the set of all such $A$-valued $m$-edge-colorings of $G$ by $\textrm{Hom}(G,A)$.  For $a \in A$, we denote by $a_G \in \textrm{Hom}(G,A)$ the constant mapping $a_G(e)=a$ for all $e\in E(G)$ and call such a coloring \textit{monochromatic}.
\end{definition}

We will require the following definition which will be used heavily throughout the paper.  If we say a graph $G$ is isomorphic to a graph $H$, we mean there exists a map $\pi:V(G)\rightarrow V(H)$, called a {\em graph isomorphism}, which satisfies $\pi\cp{V(G)}=V(H)$ and preserves the incidence structure of $G$ -- meaning that the vertices $v, u \in V(G)$ are (resp. are not) adjacent in $G$ if and only if the vertices $\pi(v),\pi(u) \in V(H)$ are (resp. are not) adjacent in $H$.  Since we will concern ourselves primarily with the edge set of a graph, given a graph isomorphism $\pi$, let $\pi_*:E(G)\rightarrow E(H)$ denote the map induced by $\pi$ which sends the edge $e \in E(G)$ incident to vertices $v$ and $w$ to the edge $\pi_*(e)\in E(H)$ incident to $\pi(v)$ and $\pi(w)$.  

\begin{definition}[Isomorphism Set] \
    Let $G$ and $X$ be finite, undirected graphs.  Write $Y \leq G$ if $Y$ is a subgraph of $G$ and write $Y \cong_\pi X$ if $\pi:V(Y)\rightarrow V(X)$ is a graph isomorphism.   Then, we say the {\em isomorphism set of $X$ in $G$} is the set $G/X = \cb{\pi_*: Y\leq G, \es Y \cong_\pi X}$.
\end{definition}

The isomorphism set of a graph $X$ in a graph $G$ will serve as a useful way to write formulas without making an explicit reference to the subgraphs of $G$ or any assumptions as to whether or not $X$ is isomorphic to {\em any} subgraphs of $G$, since $G/X$ is still defined in such cases (that is, $G/X=\emptyset$ if $X$ is not isomorphic to a subgraph of $G$).  We will now provide some examples to familiarize the reader with this notion.

\begin{example} 
    Let $G$ be a finite, undirected graph.  Then, $G/G = \cb{\pi_*: Y \leq G, Y \cong_{\pi} G}$ by definition. So, $G/G= \cb{\pi_* : \pi\in \txt{Aut}(G)}$. For a less trivial example, the number of elements belonging to $K_4/K_3$ can be determined by noticing that there are $\binom{4}{3}=4$ distinct subsets of $3$ vertices of $K_4$ and $|S_3|=3!=6$ automorphisms of each subgraph induced by such a subset.  So, $|K_4/K_3|=4!=24$.
\end{example}

For the remainder of the paper, we will do away with the asterisk subscript and simply write $\pi \in G/X$.  Using the two definitions given above, we are now ready to provide a somewhat more symbolically nuanced (though, ultimately, equivalent) reformulation of classical Ramsey numbers. 

\begin{definition}[Ramsey Numbers]\label{RN} \ 
    Let $A$ be an $m$-set and let $\cb{z_j}_{j\in A} \subseteq \Z_{>0}$.  The \textit{Ramsey number} $R(z_a: a \in A)$ is  the smallest $n$ such that for any $t \geq n$ and any $\rho \in \textrm{Hom}(K_t,A)$, there exists some $a \in A$ and $\pi \in K_t/K_{z_a}$ satisfying $\restr{\rho}{\pi^{-1}(K_{z_a})}= a_{K_{z_a}}\circ \pi$.
\end{definition}

Of course, it is natural to ask when or if it is possible to guarantee that some suitably ordered sequence of sufficiently 'nice' graphs eventually possesses a similar property.  Namely, given two index sets $I$ and $J$ with some sufficiently amicable ordering and two sequences of graph $\cb{G_i}_{i \in I}$ and $\cb{X_j}_{j \in J}$, when is it the case that there exists $n \in I$ such that, for any $k \geq n$, any {\em global} $m$-edge-coloring of $G_k$ has a specified (possibly non-constant) {\em local} restriction to a subgraph of $G_k$ isomorphic to $X_j$ for some $j \in J$?

Naturally, for the question to be answered in the affirmative, it seems apparent that at least one of the graphs belonging to $\cb{X_j}_{j\in J}$ must be isomorphic to a subgraph of $G_i$ for all indices $i\in I$ beyond a certain point.  So, for this reason, we give a definition that will simplify things considerably.  In what follows, we will use $I$ to denote our index set and will assume it has a strict well-ordering--that is, $I$ has a strict total order and every nonempty subset of $I$ has a unique minimal element with regard to the restriction of the order to that subset. Given such a strictly well-ordered set $I$, we will let let $0$ denote the unique minimal element of $I$, and let $k$  denote the unique minimal element of $I\setminus \cb{t \in I: 0 \leq t < k}$ for appropriately sized $k$, writing $I_{\geq k}=\cb{t \in I : k \leq t}$.  That is, $I_{\geq k}$ is the subset of $I$ obtained by removing the $k$ elements of lowest order from $I$. As a shorthand, if $T$ is a set indexed by $I$, we write $T_{\geq k}$ to denote the subset of $T$ with indices in $I_{\geq k}$.

\begin{definition}[Constituent, Hereditary] \ 
    Let $I$ be a strictly well-ordered set and let $\mbb{G}=\cb{G_i}_{i \in I}$ and $\mbb{X}=\cb{X_j}_{j \in J}$ be collections of finite, undirected graphs. Then, if $G_i/X_j$ is nonempty for all $i \in I$ and all $j \in J$, we say $\mbb{X}$ is {\em constituent} in $\mbb{G}$ and write $\mbb{X}\unlhd \mbb{G}$. If $\cp{\mbb{G} \setminus \mbb{G}_{{\geq i}}} \unlhd \mbb{G}_{\geq k}$ for every $0< i \leq k$, we say that $\mbb{G}$ is {\em hereditary}.
\end{definition}

Broadly speaking, a set of graphs $\mbb{G}$ indexed by a strict well-ordered set $I$ is hereditary if the graphs belonging to $\mbb{G}$ induce an order via containment that agrees with the order on $I$. That is, heredity is a {\em global} property belonging to $\mbb{G}$.  To contrast, a set of graphs $\mbb{X}$ is constituent in a set of graphs $\mbb{G}$ indexed by strict well-ordered set if {\em each} element of $\mbb{G}$ contains a subgraph isomorphic to {\em each} element of $\mbb{X}$.  That is, constituency is a {\em local} property of the elements of $\mbb{G}$ relative to the set $\mbb{X}$.  It is also worth mentioning that, in general, we will not require the entirety of a set $\mbb{X}$ to be constituent in $\mbb{G}$ in most situations where such a pair will arise.  Since it is easier to do so, however, we present their definitions together for the sake of clarity.  

We introduce two final definitions before proceeding to our generalization of Ramsey numbers.

\begin{definition}[Ramsey base]
    Let $\mbb{G}=\cb{G_i}_{i\in I}$ be a collection of undirected graphs and let $\mcal{S}=\cb{S_i}_{i \in I}$ be a collection of subsets $S_i \subseteq \textrm{Hom}(G_i,A_i)$ for some sets $A_i$.  If $\mbb{G}$ is hereditary, we say the pair $(\mbb{G},\mcal{S})$ is a {\em Ramsey base}. 
\end{definition}

Ramsey bases will serve as the 'basic' graphs we color and check for certain subcolorings.  The counterpart to this consists of what we're checking for. 

\begin{definition}[Ramsey Symbol]
    Let $\mbb{X}=\cb{\mbb{X}_i}_{i\in I}$ be a collection of sets $\mbb{X}_i=\cb{X_j}_{{j\in J_i}}$ of finite, undirected graphs indexed by a finite set $J_i$ for each $i \in I$, and let $\mcal{C}=\cb{C_i}_{i\in I}$ be a collection of sets $C_i=\cb{\psi_j}_{{j\in J_i}}$ of $A_i$-valued edge colorings $\psi_j\in \textrm{Hom}(X_j,A_i)$ for all $j \in J_i$.  If $\cb{\coprod_{{j\in J_i}} X_j}_{i\in I}$ is hereditary, we say the set $(\mbb{X},\mcal{C})$ is a {\em Ramsey symbol}.  
\end{definition}

Broadly, viewing a Ramsey number with a given Ramsey base as a function, Ramsey symbols will serve as the {\em argument} of these functions.  For the remainder of the paper, given a Ramsey symbol $\cp{\mbb{X},\mcal{C}}$, we will implicitly take $I$ as the index set of $\mbb{X}$ and $\mcal{C}$, and we will likewise take $J_i$ as the index set of $\mbb{X}_i$ and $C_i$.  With these definitions in hand, we now have the framework to produce the generalization of Ramsey numbers previously described.

\begin{definition}[Ramsey Numbers]\label{GRN} \ 
    Let $(\mbb{G},\mcal{S})$ be a Ramsey base and let $(\mbb{X},\mcal{C})$ be a Ramsey symbol.  The \textit{Ramsey number} $R_{(\mbb{G},\mcal{S})}\left(\mbb{X},\mcal{C}\right)$ is the smallest $n \in I$, if such an $n$ exists, such that, for any $t \geq n$ and any $A_t$-valued edge-coloring $\rho \in S_t \subseteq \textrm{Hom}(G_t,A_t)$, there exists some $j \in J_t$ and $\pi \in G_t/X_j$ satisfying $\restr{\rho}{\pi^{-1}(X_j)} = \psi_j \circ \pi$.  If such an $n$ does not exist, then we say the Ramsey number $R_{(\mbb{G},\mcal{S})}\left(\mbb{X},\mcal{C}\right)$ does not exist.
\end{definition}

The above definition is given in what will be, for practical purposes, too much generality.  Throughout a substantial portion of the body of this paper, this generality will exceed the scope of what is useful.  All the same, it is useful to state in this overly general form, since it more readily highlights the restrictions that need to be imposed to reach cases that can be dealt with.  That is, we will frequently require our Ramsey base and Ramsey symbol to have certain desirable properties in order to make conclusions about the existence or nonexistence of a corresponding Ramsey numbers.  After proving some results for these more restrictive cases in early sections, we will return to a more general case closer to the definition above by using certain decompositions and embeddings to convert our previously obtained results to this broader context.  The following definitions make precise the nature of the constraints we will initially require.


\begin{definition}[Locally Finite, Finite, Maximal] 
    Let $(\mbb{G},\mcal{S})$ be a Ramsey base, where $\mathbb{G}=\cb{G_i}_{i \in I}$ and $\mcal{S}=\cb{S_i}_{i\in I}$ with $S_i \subseteq \textrm{Hom}(G_i,A_i)$. Then:
    \begin{enumerate}
        \item If $|A_i|< \infty$ for all $i \in I$, we say it is {\em of locally finite type}.
        \begin{enumerate}[label=\theenumi.\alph*.]
            \item If, additionally, $A_i=A_j$ and all $i,j \in I$, we say it is {\em of finite type}.  
        \end{enumerate}
        \item If $S_i = \textrm{Hom}(G_i,A_i)$ for all $i \in I$, we say it is {\em maximal}.
    \end{enumerate}
\end{definition}


\begin{definition}[Locally Finite, Finite, Uniform] Let $(\mbb{X},\mcal{C})$ be a Ramsey symbol, where $\mathbb{X}=\cb{X_i}_{i \in I}$, $X_i = \cb{X_j}_{j \in J_i}$, and $\mcal{C}=\cb{C_i}_{i\in I}$ with $C_i = \cb{\psi_j}_{j \in J_i}$ for some $\psi_j \in \textrm{Hom}(X_j,A_i)$.  Then:
 
\begin{enumerate}
\item If $|A_i|< \infty$ for all $i\in I$, we say it is {\em of locally finite type}.  
\begin{enumerate}[label=\theenumi.\alph*.]
    \item If, additionally, $A_i=A_j$ for all $i,j \in I$, we say it is {\em of finite type}. 
\end{enumerate}
\item If $A_i \subseteq A_t$ and $(\mbb{X}_i,C_i)=(\mbb{X}_t,\mcal{C}_t)$ for all $t \geq i$ (where the equality is understood as factoring through an inclusion if $A_i \neq A_t$), we say it is {\em uniform}.
\end{enumerate}
\end{definition}


As a natural shorthand, if the Ramsey base $(\mbb{G},\mcal{S})$ is maximal, we write $R_{\mbb{G}}\left(\mbb{X},\mcal{C}\right)$ instead of $R_{(\mbb{G},\mcal{S})}\left(\mbb{X},\mcal{C}\right)$ for simplicity.  Similarly, if the Ramsey symbol $(\mbb{X},\mcal{C})$ is uniform, we will omit reference to the index set $I$ for $\mbb{X}$ and $\mcal{C}$ and write $\mbb{X}=\left \{X_j \right \}_{j\in J}$ and $\mcal{C}=\left \{\psi_j \right \}_{j\in J}$ instead. 

Notice the minor asymmetry between the constraints placed on Ramsey symbols and Ramsey bases in item $(2)$ of each of the above definitions.  This is mainly due to the fact that, while we {\em could} certainly define maximal Ramsey symbols or uniform Ramsey bases in the same fashion, it is straightforward to see that they would be (in general) of relatively little interest, since maximal Ramsey symbols would, essentially, measure only subgraph inclusion and uniform Ramsey bases would allow for only a trivial form of heredity.

Using the above definitions, notice that setting $\mbb{G}=\ncb{K_n}_{n\in \N}$ and letting $\mbb{X} = \ncb{K_{z_j}}_{{j\in [m]}}$, $\mcal{C}=\ncb{j_{K_{z_j}}}_{{j\in [m]}}$ with $[m] = \ncb{1,\hdots,m}$ yields $R_{\mbb{G}}\left(\mbb{X},\mcal{C} \right) = R\left (z_1,\hdots,z_{m}\right)$.  So, the Ramsey numbers we describe are a generalization of classical Ramsey numbers.  Likewise, the generalized Ramsey numbers $R(H_1,\hdots,H_m)$ for graphs $H_1,\hdots,H_m$--as described in; e.g.,  \cite{burr1974generalized}--can be produced from our Ramsey numbers in the same way by simply replacing $\mathbb{X}=\ncb{H_j}_{{j\in [m]}}$ and $\mathcal{C}=\ncb{j_{H_j}}_{j \in [m]}$.  

Indeed, as a Ramsey number of the form we describe in Definition~\ref{GRN}, both the classical and generalized Ramsey numbers previously studied are of a particularly convenient form: they have maximal bases of finite type and a uniform symbols of finite type.  Let's look at some further examples to familiarize ourselves with these constraints.

\begin{example}
    Let $\mbb{G}=\ncb{K_i}_{i \in \N}$ and suppose $\mcal{S}=\ncb{S_i}_{i\in \N}$ is such that for any $t \in \N$, the set $S_t$ consists of the $A_t$-valued edge colorings of $G_t$, where $|A_t|=t(t-1)/2$, such that no two edges of $G_t$ are assigned the same element in $A_t$.  Then, since $\mbb{G}$ is hereditary and $|S_i| < \infty$ for all $i \in I$, $(\mbb{G},\mcal{S})$ is a Ramsey base of locally finite type.  However, since $S_i \neq S_t$ for $i \neq t$, the Ramsey base $(\mbb{G},\mcal{S})$ is {\em not} of finite type.
\end{example}
\begin{example}
    Let $\mbb{X}=\ncb{\mbb{X}_i}_{i\in \N}$, where $\mbb{X}_i = \ncb{K_{2j}}{j\in \N}$ for all $i \in \N$, and let $\mcal{C}=\ncb{C_i}_{i\in \N}$, where $C_i = \ncb{\textrm{Hom}(K_{2j},A)}_{j\in \N}$ for some set $A$ such that $|A|<\infty$ and for all $i \in \N$.  Then, $(\mbb{X},\mcal{C})$ is {\em not} a Ramsey symbol, since the index set $J_i = \N$ of $\mbb{X}_i$ is not finite.  However, if $(\mbb{X},\mcal{C})$ {\em was} a Ramsey symbol, then as $(\mbb{X}_i,C_i)=(\mbb{X}_k,C_k)$ for all $i,k \in \N$, $(\mbb{X},\mcal{C})$ it would have been a uniform Ramsey symbol, and as both $A_i=A$ for all $i \in I$ and $|A|<\infty$, $(\mbb{X},\mcal{C})$ would also have been a Ramsey symbol of finite type.
\end{example}

As mentioned previously, certain cases of Ramsey numbers will prove to be more or less amicable to study.  Among those cases, two particular ones are worth distinguishing with names.

\begin{definition}[Exact, Galois Type]
    Let $(\mbb{G},\mcal{S})$ be a Ramsey base and let $(\mbb{X},\mcal{C})$ be a uniform Ramsey symbol.  If $\mbb{V} \unlhd \mbb{G}$ for some nonempty subset $\mbb{V} \subseteq \mbb{X}$, we say that the Ramsey number $R_{(\mbb{G},\mcal{S})}\left(\mbb{X},\mcal{C}\right)$ is \textit{exact}.  
    
    \noindent If the Ramsey base $(\mbb{G},\mcal{S})$ is of finite type, the Ramsey symbol $(\mbb{X},\mcal{C})$ is uniform and of finite type, and $|A|=p^k$ for a prime $p$ and some positive integer $k$, we say that the Ramsey number $R_{(\mbb{G},\mcal{S})}\left(\mbb{X},\mcal{C}\right)$ is \textit{of Galois type}.
\end{definition}

For an example of a classical Ramsey number of Galois type, consider any one where the number of colors used is a prime power.  We will address this case more later.  

To start, however, the remainder of this section is dedicated to illustrating the general approach we will employ, which is to show that complicated Ramsey numbers can be converted into simpler ones.   We use the exact Ramsey numbers for this, since they are particularly simple.  We will do this by building relatively simple `tools' to relate Ramsey numbers with different bases and symbols.  The first example of such a tool is very straightforward and will allow us to remove unnecessary portions of a Ramsey base.

\begin{definition}[Resolutions]\label{exact xres}
    Let $\mbb{G}=\cb{G_i}_{i\in I}$ be hereditary and let $\mbb{X}=\cb{X_j}_{j \in J}$ be a collection of finite, undirected graphs. We say {\em $\mbb{G}$ admits a $\mbb{X}$-resolution} if there is a minimum value $k$ such that $\mathbb{V} \unlhd \mathbb{G}_{\geq k}$ for some $\mathbb{X}\supseteq \mathbb{V} \neq \emptyset$.  If $\mbb{G}$ admits an $\mbb{X}$-resolution, we say $\mbb{G}_{\geq k}$ is the {\em $\mbb{X}$-resolution of $\mbb{G}$} and write $\txt{res}_\mbb{X}(\mbb{G}) = \mbb{G}_{\geq k}$.
\end{definition}

As a shorthand, if $\mbb{G}$ admits an $\mbb{X}$-resolution $\txt{res}_{\mbb{X}}(\mbb{G})=\mbb{G}_{\geq k}$ and $(\mbb{G},\mcal{S})$ is a Ramsey symbol, we will write $\txt{res}_{\mbb{X}}(\mbb{G},\mcal{S}):= (\mbb{G}_{\geq k},\mcal{S}_{\geq k})$ to avoid explicit reference to the value $k$ and the unnecessary introduction of additional variables.  Directly by construction, it is not particularly difficult to see that if $\mbb{G}$ admits an $\mbb{X}$-resolution and $R_{(\mbb{G},\mcal{S})}\left(\mbb{X},\mcal{C}\right)$ is a Ramsey number with a uniform Ramsey symbol, then $R_{\txt{res}_{\mbb{X}}(\mbb{G},\mcal{S})}\left(\mbb{X},\mcal{C}\right)$ is exact. 

\begin{example}
    Let $\mbb{G}=\cb{K_n}_{n \in \N}$ and let $\mbb{X}=\cb{K_6,K_{40}}$.  Then, $\mbb{X}$ is not constituent in $\mbb{G}$, as $K_t/K_6 = \emptyset$ for $0<t < 6$ and $K_t/K_{40} = \emptyset$ for $0<t < 40$.  However, $\mbb{G}$ does admit an $\mbb{X}$-resolution, as the subsets $\mbb{G}_{\geq 6}=\cb{K_n}_{6 \leq n\in \N}\subseteq \mbb{G}$ and $\mbb{V}=\cb{K_6}\subseteq \mbb{X}$ satisfy $\mbb{V} \unlhd \mbb{G}_{\geq 6}$.  Since it is straightforward to see that this is the least value $k$ such that there is a nonempty subset $\mathbb{V}$ of $\mathbb{X}$ satisfying $\mathbb{V}\unlhd\mathbb{G}_{\geq k}$, we have $\txt{res}_{\mbb{X}}(\mbb{G})=\mbb{G}_{\geq 6}$.
\end{example}

$\mbb{X}$-resolutions give us a method to turn non-exact Ramsey numbers into exact ones.

\begin{theorem}\label{exact resolution invariance}
    Let $(\mbb{G},\mcal{S})$ be a Ramsey base and let $(\mbb{X},\mcal{C})$ be a uniform Ramsey symbol.  Then, if $\mbb{G}$ admits an $\mbb{X}$-resolution, $$R_{(\mbb{G},\mcal{S})}\cp{\mbb{X},\mcal{C}} = R_{\txt{res}_{\mbb{X}}(\mbb{G},\mcal{S})}\cp{\mbb{X},\mcal{C}}.$$
\end{theorem}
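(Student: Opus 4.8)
The plan is to reduce the equality to a statement about which indices ``witness'' the Ramsey condition, and then to show that the indices stripped off by the resolution could never have been witnesses in the first place. Write $n_{1}=R_{(\mbb{G},\mcal{S})}(\mbb{X},\mcal{C})$ and $n_{2}=R_{\txt{res}_{\mbb{X}}(\mbb{G},\mcal{S})}(\mbb{X},\mcal{C})$, and recall that $\txt{res}_{\mbb{X}}(\mbb{G},\mcal{S})=(\mbb{G}_{\geq k},\mcal{S}_{\geq k})$ for the minimal $k$ of Definition~\ref{exact xres}. Call an index $n\in I$ a \emph{witness} if for every $t\geq n$ and every $\rho\in S_{t}$ there exist $j\in J$ and $\pi\in G_{t}/X_{j}$ with $\restr{\rho}{\pi^{-1}(X_{j})}=\psi_{j}\circ\pi$, and let $W\subseteq I$ denote the set of witnesses; then $n_{1}=\min W$, where $n_{1}$ fails to exist exactly when $W=\emptyset$. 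Since the witness condition only constrains the indices $t\geq n$, for $n\in I_{\geq k}$ it says literally the same thing whether the ambient data is $(\mbb{G},\mcal{S})$ or $(\mbb{G}_{\geq k},\mcal{S}_{\geq k})$; hence $n_{2}=\min\cp{W\cap I_{\geq k}}$. It therefore suffices to prove $W\subseteq I_{\geq k}$, since then $W\cap I_{\geq k}=W$ and the two minima agree (and both fail to exist simultaneously).

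The key point is that no target graph embeds into a base graph whose index lies below $k$. For each $j\in J$ put $D_{j}=\cb{i\in I : G_{i}/X_{j}\neq\emptyset}$. Because $(\mbb{G},\mcal{S})$ is a Ramsey base, $\mbb{G}$ is hereditary, so whenever $i\leq i'$ the graph $G_{i}$ is isomorphic to a subgraph of $G_{i'}$; composing such an inclusion with a subgraph embedding $X_{j}\hookrightarrow G_{i}$ shows that $D_{j}$ is upward closed in $I$. By Definition~\ref{exact xres}, $k$ is the least index for which some nonempty $\mbb{V}\subseteq\mbb{X}$ satisfies $\mbb{V}\unlhd\mbb{G}_{\geq k}$; but $\mbb{V}\unlhd\mbb{G}_{\geq k}$ for some nonempty $\mbb{V}$ is equivalent to $I_{\geq k}\subseteq D_{j}$ for some single $j\in J$, so $k=\min\cb{k' : I_{\geq k'}\subseteq D_{j}\text{ for some }j}$. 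If some $i<k$ belonged to some $D_{j}$, upward closure of $D_{j}$ would give $I_{\geq i}\subseteq D_{j}$, contradicting minimality of $k$. Hence $G_{i}/X_{j}=\emptyset$ for every $i<k$ and every $j\in J$.

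Now take any $n\in W$ and suppose $n<k$. Applying the witness condition with $t=n$: because $G_{n}/X_{j}=\emptyset$ for all $j$, the clause ``there exist $j\in J$ and $\pi\in G_{n}/X_{j}$ with $\ldots$'' fails for \emph{every} coloring $\rho$, so for $n$ to be a witness we must have $S_{n}=\emptyset$. This is the step that needs care: under the natural nondegeneracy assumption that every admissible set $S_{i}$ is nonempty -- which should be imposed on a Ramsey base (or added to the hypothesis), since without it the equality genuinely fails, e.g.\ when $S_{i}=\emptyset$ for all $i<k$ -- we obtain a contradiction, so $n\geq k$. This proves $W\subseteq I_{\geq k}$, hence $n_{1}=n_{2}$.

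I expect the main obstacle to be precisely this last point: ruling out that $R_{(\mbb{G},\mcal{S})}(\mbb{X},\mcal{C})$ is ``accidentally'' some index below $k$ at which the matching condition holds only because there are no colorings being tested there. The rest -- the reduction to comparing witness sets and the heredity-driven fact that nothing embeds below level $k$ -- is routine bookkeeping with the definitions.
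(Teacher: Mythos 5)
Your proof is correct and follows essentially the same route as the paper's: heredity forces $G_i/X_j=\emptyset$ for every $i<k$ and every $j\in J$, so the defining condition can first be satisfied only at an index in $I_{\geq k}$, and the two minima (or the two failures to exist) coincide. The one point where you go beyond the paper is in flagging the degenerate case $S_i=\emptyset$ for $i<k$, where the condition holds vacuously below $k$ and the stated equality can genuinely fail; the paper's proof passes over this silently, so your proposed nondegeneracy hypothesis is a legitimate, if minor, sharpening rather than a defect in your argument.
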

\begin{proof}
    If $R_{\mbb{G}}\cp{\mbb{X},\mcal{C}}$ does not exist, then we are done, as it is clear that $R_{\txt{res}_{\mbb{X}}(\mbb{G},\mcal{S})}\cp{\mbb{X},\mcal{C}}$ cannot exist either.  So, suppose $\mbb{G}$ admits an $\mbb{X}$-resolution and suppose $R_{(\mbb{G},\mcal{S})}\cp{\mbb{X},\mcal{C}}$ exists.  Then, by definition, $\txt{res}_{\mbb{X}}(\mbb{G})=\mbb{G}_{\geq k}$ for some $0 \leq k< \infty$ and, by the uniformity of the Ramsey symbol, $R_{(\mbb{G},\mcal{S})}\cp{\mbb{X},\mcal{C}}$ is the least $n \in I$ such that, for any $t \geq n$ and any edge-coloring $\rho\in S_t$ of $G_t$, there exists $\pi \in G_t/X_j$ satisfying $\restr{\rho}{\pi^{-1}(X_j)} = \psi_j\circ \pi$ for some $j \in J$.  As $\mbb{G}$ is hereditary by assumption, then $G_m/X_j = \emptyset$ for every $j \in J$ if $m< k$ by definition, as $\mbb{V} \unlhd \mbb{G}_{\geq m}$ must hold for some $\cb{X_j}=\mbb{V} \subseteq \mbb{X}$ if $G_m/X_j$ is nonempty.  Hence, $R_{\mbb{G}}\cp{\mbb{X},\mcal{C}} \geq k$. 
\end{proof}

That is to say, the Ramsey number $R_{(\mbb{G},\mcal{S})}\cp{\mbb{X},\mcal{C}}$ with uniform Ramsey symbol for which $\mbb{G}$ admits an $\mbb{X}$-resolution remains invariant under $\mbb{X}$-resolutions of the Ramsey base.  In fact, a nice partial converse can be shown as well.

\begin{lemma}
    Let $(\mbb{G},\mcal{S})$ be a Ramsey base and let $(\mbb{X},\mcal{C})$ be a uniform Ramsey symbol.  Then, if the Ramsey number $R_{(\mbb{G},\mcal{S})}\cp{\mbb{X},\mcal{C}}$ exists, $\mbb{G}$ admits an $\mbb{X}$-resolution.
\end{lemma}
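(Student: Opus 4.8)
The plan is to prove the contrapositive in the natural way: assume $\mbb{G}$ does \emph{not} admit an $\mbb{X}$-resolution, and show the Ramsey number $R_{(\mbb{G},\mcal{S})}\cp{\mbb{X},\mcal{C}}$ cannot exist. First I would unpack what failure of an $\mbb{X}$-resolution means. By Definition~\ref{exact xres}, $\mbb{G}$ admits an $\mbb{X}$-resolution precisely when there is a \emph{minimum} $k$ with $\mbb{V} \unlhd \mbb{G}_{\geq k}$ for some nonempty $\mbb{V} \subseteq \mbb{X}$. Since $\mbb{G}$ is hereditary, the collection of $k$ for which such a $\mbb{V}$ exists is upward closed in $I$ (if $\mbb{V}\unlhd\mbb{G}_{\geq k}$ then $\mbb{V}\unlhd\mbb{G}_{\geq k'}$ for all $k'\geq k$, because constituency only requires nonemptiness of $G_i/X_j$ for the indices that remain), and $I$ is strictly well-ordered, so a minimum automatically exists \emph{whenever the set is nonempty}. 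Hence ``$\mbb{G}$ does not admit an $\mbb{X}$-resolution'' is equivalent to: for every $k \in I$ and every nonempty $\mbb{V} \subseteq \mbb{X}$, we have $\mbb{V} \not\unlhd \mbb{G}_{\geq k}$; equivalently, for every index $i \in I$ there is some $j \in J$ (with $J$ the common index set of the uniform symbol) such that $G_i/X_j = \emptyset$. This observation — that well-ordering upgrades ``some $k$'' to ``a minimum $k$'' — is the crux of why the converse holds, and it is worth stating as the first step.

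Next I would translate this into the defining condition for the Ramsey number. Fix any candidate $n \in I$; I must show $n$ fails to witness the Ramsey property, i.e.\ there is some $t \geq n$ and some $\rho \in S_t \subseteq \txt{Hom}(G_t,A_t)$ admitting no $j \in J$ and $\pi \in G_t/X_j$ with $\restr{\rho}{\pi^{-1}(X_j)} = \psi_j \circ \pi$. Take $t = n$ itself. By the reformulated hypothesis, for \emph{every} $j \in J$ we have $G_n/X_j = \emptyset$, so the set of pairs $(j,\pi)$ with $\pi \in G_n/X_j$ is empty — there is simply nothing to restrict to, hence no way the coloring condition can be satisfied. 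The only remaining subtlety is that $S_n$ must be nonempty so that \emph{some} $\rho$ exists to serve as a counterexample; here I would note that $\txt{Hom}(G_n,A_n)$ is always nonempty (it contains, e.g., any monochromatic coloring $a_{G_n}$ provided $A_n \neq \emptyset$, and if $A_n = \emptyset$ the graph $G_n$ must be edgeless for $\txt{Hom}$ to be nonempty — a degenerate case I would address with a one-line remark, or simply by assuming $A_i$ nonempty as is implicit throughout). If one worries that $S_n$ could be empty as a proper subset of $\txt{Hom}(G_n,A_n)$, the cleanest fix is to observe that the definition of the Ramsey number quantifies over $\rho \in S_t$, so an empty $S_n$ makes the condition at $t=n$ vacuously \emph{true}, not false — meaning I should instead argue: since no $n$ can be a witness, pick the argument to run at whichever $t \geq n$ has $S_t \neq \emptyset$ if any does, and if $S_t = \emptyset$ for all $t \geq n$ then... actually this reverses things. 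So the honest move is to assume, as the paper's running examples do, that each $S_i \neq \emptyset$ (or at least contains a monochromatic coloring), and I would flag this dependence explicitly.

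The main obstacle is therefore not the combinatorial heart — which is the short argument that ``no resolution'' forces every $G_i/X_j$ to be empty for some $j$, killing the Ramsey condition at every index — but rather the bookkeeping around degenerate and edge cases: the well-ordering argument that promotes ``exists $k$'' to ``exists minimum $k$'' (needed to make the negation of Definition~\ref{exact xres} clean), and the possibility that some $S_i$ is empty. I would organize the write-up as: (1) reformulate ``admits no $\mbb{X}$-resolution'' using heredity and well-ordering; (2) fix arbitrary $n \in I$, choose any $\rho$ in $S_n$ (invoking nonemptiness), and observe that $G_n/X_j = \emptyset$ for all $j \in J$ forces the restriction condition to fail vacuously; (3) conclude no $n$ witnesses the Ramsey property, so $R_{(\mbb{G},\mcal{S})}\cp{\mbb{X},\mcal{C}}$ does not exist, contradiction. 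The whole proof should be under half a page.
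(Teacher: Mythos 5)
Your approach is the contrapositive of the paper's direct argument, and it rests on exactly the same two ingredients the paper uses: heredity of $\mbb{G}$ makes the relation ``$\mbb{V}\unlhd\mbb{G}_{\geq k}$'' upward closed in $k$, and the strict well-ordering of $I$ then upgrades ``some $k$ works'' to ``a minimum $k$ works.'' So in substance this is the same proof, and your explicit handling of the possibility $S_i=\emptyset$ is a point the paper's own proof silently assumes away as well; flagging it as a standing hypothesis is the right call.

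There is, however, one quantifier slip you should repair before writing this up. You reformulate ``$\mbb{G}$ admits no $\mbb{X}$-resolution'' as ``for every $i\in I$ there is \emph{some} $j\in J$ with $G_i/X_j=\emptyset$,'' but two lines later you invoke the stronger statement ``for \emph{every} $j\in J$ we have $G_n/X_j=\emptyset$,'' which is what the argument actually needs (otherwise some pair $(j,\pi)$ might survive and satisfy the coloring condition). The stronger statement is the correct negation, and it does follow from the tools you already cite: for each fixed $j$, apply the failure of constituency to the singleton $\mbb{V}=\cb{X_j}$ with $k=n$ to get some $i\geq n$ with $G_i/X_j=\emptyset$, and then use heredity of $\mbb{G}$ (emptiness of $G_i/X_j$ propagates \emph{downward} in the index) to conclude $G_n/X_j=\emptyset$. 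As stated, though, your ``equivalently'' is false as an equivalence --- e.g.\ if $G_i/X_1\neq\emptyset$ for all $i$ while $G_i/X_2=\emptyset$ for all $i$, your weaker condition holds yet $\cb{X_1}\unlhd\mbb{G}$ gives a resolution --- and the step that uses the strong form is currently unsupported by the reformulation you wrote down. Replace the reformulation with ``$G_i/X_j=\emptyset$ for every $i\in I$ and every $j\in J$'' and the proof closes cleanly.
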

\begin{proof}
    If the Ramsey number $R_{(\mbb{G},\mcal{S})}\cp{\mbb{X},\mcal{C}}$ exists and has a uniform symbol then, by definition, $R_{(\mbb{G},\mcal{S})}\cp{\mbb{X},\mcal{C}}$ is the least $n \in I$ such that, for any $t \geq n$ and any edge-coloring $\rho\in S_t$ of $G_t$, there exists $\pi \in G_t/X_j$ satisfying $\restr{\rho}{\pi^{-1}(X_j)} = \psi_j\circ \pi$ for some $j \in J$.  As $\mbb{G}$ is hereditary by assumption, then $G_m/X_j = \emptyset$ for every $j \in J$ if $G_m \in \mbb{G}\setminus \txt{res}_{\mbb{X}}(\mbb{G})$ by definition, as $\mbb{V} \unlhd \mbb{G}_{\geq m}$ must hold for some $\cb{X_j}=\mbb{V} \subseteq \mbb{X}$ if $G_m/X_j$ is nonempty.  As the existence of $R_{(\mbb{G},\mcal{S})}\cp{\mbb{X},\mcal{C}}$ requires $G_k/X_j$ to be nonempty for some $j \in J$ and every $k \geq R_{(\mbb{G},\mcal{S})}\cp{\mbb{X},\mcal{C}}$, we are done.
\end{proof}

So, studying exact Ramsey numbers that exist is equivalent to studying Ramsey numbers with a uniform symbol that exist and admit $\mbb{X}$-resolutions. Though this is not necessarily a particularly interesting fact by itself, this is a good illustration of the type of approach we will employ in this paper, which can be summarized as follows: 

\begin{enumerate}
    \item Choose a class of Ramsey numbers that is sufficiently constrained so as to be amicable to study in some sense, or for which some results are known.
    \item Create a device which allows a less constrained class of Ramsey numbers to be converted to or related to the constrained class.
    \item Prove that a property--such as existence or the value--of the less constrained class of Ramsey numbers is preserved under this conversion or relation.
\end{enumerate} 

A slightly less trivial instance of this is presented next.

\section{Indicator Polynomials for Ramsey Numbers of Galois Type}\label{sec2}

In this section, we will consider Ramsey numbers of Galois type and observe a way to understand them via the zero sets of certain polynomials over finite fields.  This will prove instrumental in the later sections of the paper, and will allow us (after applying some conversions and embeddings, which we will develop later on) a basic sort of characterization of the locally-finite type Ramsey numbers.  

In the following definition, we will set ourselves up to exploit the $\F_q$-vector space structure we can impose on $\textrm{Hom}(G,\F_q)$ by defining a certain useful class of polynomial and considering the points in the free vector space $\F_q^{(E(G))}:=\oplus_{e \in E(G)}\F_q$ where they vanish.  Since $\rho \in \textrm{Hom}(G,\F_q)$ produces $v=\oplus_{e \in E(G)}\rho(e) \in \F_q^{(E(G))}$, we will treat vectors $v$ as colorings $\rho$ and vice versa.
\begin{definition}[Subgraph Coloring Indicator Polynomials] \label{inddef} 
    Let $G$, $X$ be finite undirected graphs and let $q$ be a prime power.  If $\psi \in \textrm{Hom}(X,\F_q)$, then the {\em $(X,\psi)$-indicator polynomial of $G$} is the polynomial $p[G,X,\psi] \in \F_q[x_e : e\in E(G)]$ given by 
    $$p[G,X,\psi](x)=\prod_{\pi \in G/X}\left ( 1- \prod_{e \in \pi^{-1}(X)}\left (1-\cp{x_{e}-\psi\cp{\pi(e)}}^{q-1} \right ) \right ).$$
\end{definition}

Recalling that $\F_q$ is a unique factorization domain and that $g^{q-1}=1$ for any nonzero element of $\F_q$, it follows that 
$$\prod_{e \in \pi^{-1}(X)}\left (1-\cp{x_{e}-\psi\cp{\pi(e)}}^{q-1} \right )\neq 0$$ 
\noindent if and only if $x_{e}= \psi\cp{\pi(e)}$ for every $e \in E\cp{\pi^{-1}(X)}$.  That is, indicator polynomials are just a (computationally explicit) way to write an Iverson bracket.  We will use this fact to our advantage in the following theorem.

\begin{lemma}\label{indicator zero}
    Let $G$, $X$ be finite undirected graphs and let $q$ be a prime power.  If $\psi \in \textrm{Hom}(X,\F_q)$, then $p[G,X,\psi](v) = 0$ for $v \in \F_q^{(E(G))}$ corresponding to a coloring $\rho \in \textrm{Hom}(G,\F_q)$ if and only if there exists $\pi \in G/X$ satisfying $$\restr{\rho}{\pi^{-1}(X)}=\psi\circ \pi.$$
\end{lemma}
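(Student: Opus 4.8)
The plan is to reduce the claim, exactly as the discussion preceding the lemma indicates, to the elementary fact (already recalled above) that over $\F_q$ one has $g^{q-1}=1$ for every nonzero $g\in\F_q$ while $0^{q-1}=0$, so that $1-g^{q-1}$ acts as an Iverson bracket for the condition $g=0$: it equals $1$ when $g=0$ and $0$ otherwise (here $q\geq 2$, since $q$ is a prime power). I would fix the vector $v\in\F_q^{(E(G))}$ corresponding to $\rho$, so that the coordinate of $v$ indexed by $e$ is $\rho(e)$, substitute $x_e=\rho(e)$ throughout the formula defining $p[G,X,\psi]$, and then analyze the resulting product in $\F_q$ one layer at a time.

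First I would isolate, for a fixed $\pi\in G/X$, the inner product
$$P_\pi(v):=\prod_{e\in\pi^{-1}(X)}\Bigl(1-\cp{\rho(e)-\psi(\pi(e))}^{q-1}\Bigr).$$
By the observation above, the factor indexed by an edge $e$ equals $1$ if $\rho(e)=\psi(\pi(e))$ and $0$ otherwise; being a product of elements of $\cb{0,1}\subseteq\F_q$, the quantity $P_\pi(v)$ therefore equals $1$ exactly when $\rho(e)=\psi(\pi(e))$ holds for every edge $e$ of $\pi^{-1}(X)$, that is, exactly when $\restr{\rho}{\pi^{-1}(X)}=\psi\circ\pi$, and equals $0$ otherwise. (The degenerate case in which $\pi^{-1}(X)$ carries no edges is consistent with this, since the empty product is $1$ and the restriction condition then holds vacuously.) Hence the outer factor $1-P_\pi(v)$ vanishes precisely when $\restr{\rho}{\pi^{-1}(X)}=\psi\circ\pi$, and equals $1$ otherwise.

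Finally I would assemble the pieces: $p[G,X,\psi](v)=\prod_{\pi\in G/X}\bigl(1-P_\pi(v)\bigr)$ is a product, inside the field $\F_q$, of elements each lying in $\cb{0,1}$. Since $\F_q$ is an integral domain, such a product is zero if and only if at least one of its factors is zero, i.e.\ if and only if there exists $\pi\in G/X$ with $\restr{\rho}{\pi^{-1}(X)}=\psi\circ\pi$; and in the boundary case $G/X=\emptyset$ the polynomial is the empty product $1\neq 0$, which agrees with the absence of any such $\pi$. This is exactly the stated equivalence. I do not anticipate a genuine obstacle here: the content is entirely the Iverson-bracket computation over $\F_q$, and the only point that demands a little care is the bookkeeping that turns ``the factor indexed by $e$ vanishes'' into the assertion about restricting $\rho$ to the subgraph $\pi^{-1}(X)$, while correctly handling the two nested products and the empty or edgeless cases.
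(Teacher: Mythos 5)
Your proof is correct and follows essentially the same route as the paper's: both arguments use the fact that $1-g^{q-1}$ is an Iverson bracket over $\F_q$ and that a product in the integral domain $\F_q$ vanishes iff some factor does. Your write-up is slightly more careful than the paper's (explicitly noting each factor lies in $\{0,1\}$ and handling the empty-product cases), but the underlying argument is identical.
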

\begin{proof}
    As $q$ is a prime power, $\F_q$ is a field, and is a unique factorization domain.  So, $p[G,X,\psi](\rho) = 0$ if and only if there is some $\pi \in G/X$ such that 
    
    $$\prod_{e\in \pi^{-1}(X)}\left ( 1-\cp{\rho(e)-\psi\cp{\pi(e)}}^{q-1}\right)=1.$$ 
    
    But, since every nonzero element $a \in \F_q$ satisfies $a^{q-1}=1$, this then implies that $\rho(e)-\psi\cp{\pi(e)} = 0$ must hold for every $e \in \pi^{-1}(X)$, which is to say that $\rho(e) = \psi\cp{\pi(e)}$ must hold for every $e \in \pi^{-1}(X)$.  So, this says $\restr{\rho}{\pi^{-1}(X)}=\psi \circ \pi$.
\end{proof}

If $\mbb{X}=\cb{X_j}_{j \in J}$ is a finite collection (i.e $|J|<\infty$) of finite, undirected graphs, and $\mcal{C}=\cb{\psi_j}_{j\in J}$ is a collection of $\F_q$-valued $q$-edge-colorings $\psi_j\in \textrm{Hom}(X_j,\F_q)$, then we write $p[G,\mbb{X},\mcal{C}]$ to denote the product
$$p[G,\mbb{X},\mcal{C}] :=\prod_{j\in J}p[G,X_j,\psi_j].$$  

This will allow us to use the indicator polynomials of $G$ to discern whether a given $q$-coloring of $G$ contains a colored subgraph isomorphic to a member of a given {\em family} of colored graphs.

\begin{lemma}\label{indicator zero set}
    Let $G$ be a finite undirected graph and let $q$ be a prime power.  If $\mbb{X}=\cb{X_j}_{j \in J}$ is a finite collection of finite undirected graphs and $\mcal{C}=\cb{\psi_j}_{j\in J}$ is a collection of colorings $\psi_j \in \textrm{Hom}(X_j,\F_q)$, then $p[G,\mbb{X},\mcal{C}]$ is identically zero as a polynomial function if and only if for every $\rho\in \textrm{Hom}(G,\F_q)$, there exists $\pi \in G/X_j$  satisfying $\restr{\rho}{\pi^{-1}(X_j)}=\psi_j \circ \pi$ for some $j \in J$.
\end{lemma}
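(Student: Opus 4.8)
The plan is to reduce the statement to the single-graph case already settled in Lemma~\ref{indicator zero}, using nothing beyond the fact that $\F_q$ is an integral domain. First I would record the (trivial but essential) observation that a coloring in $\textrm{Hom}(G,\F_q)$ is nothing more than an assignment of elements of $\F_q$ to the edges of $G$, so the correspondence $\rho \mapsto \oplus_{e\in E(G)}\rho(e)$ identifies $\textrm{Hom}(G,\F_q)$ with the \emph{entire} point set $\F_q^{(E(G))}$. Consequently, ``$p[G,\mbb{X},\mcal{C}]$ is identically zero as a polynomial function'' means exactly that $p[G,\mbb{X},\mcal{C}](\rho)=0$ for every $\rho\in\textrm{Hom}(G,\F_q)$. (The phrase ``as a polynomial function'' is needed here precisely because over a finite field a nonzero polynomial can induce the zero function; this is also why the finiteness of $J$ matters, since it guarantees $p[G,\mbb{X},\mcal{C}]$ is an honest polynomial with a well-defined value at each point.)

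Next I would fix an arbitrary $\rho\in\textrm{Hom}(G,\F_q)$ and evaluate. Since $p[G,\mbb{X},\mcal{C}] = \prod_{j\in J}p[G,X_j,\psi_j]$ is a finite product and $\F_q$ is a field, in particular an integral domain, we have $p[G,\mbb{X},\mcal{C}](\rho)=0$ if and only if $p[G,X_j,\psi_j](\rho)=0$ for at least one $j\in J$. Applying Lemma~\ref{indicator zero} to each such factor, $p[G,X_j,\psi_j](\rho)=0$ if and only if there is some $\pi\in G/X_j$ with $\restr{\rho}{\pi^{-1}(X_j)}=\psi_j\circ\pi$. Chaining the two equivalences gives: $p[G,\mbb{X},\mcal{C}](\rho)=0$ if and only if there exist $j\in J$ and $\pi\in G/X_j$ with $\restr{\rho}{\pi^{-1}(X_j)}=\psi_j\circ\pi$. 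Quantifying over all $\rho$ then yields the claim, namely that $p[G,\mbb{X},\mcal{C}]$ vanishes identically as a function precisely when every coloring $\rho$ admits such a $j$ and $\pi$.

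I do not expect a genuine obstacle in this argument: the mathematical content is entirely contained in Lemma~\ref{indicator zero}, and what remains is bookkeeping with quantifiers together with the integral-domain factorization of a product. The only two points that require care are (i) not conflating the zero polynomial with the zero polynomial function, which is handled by the point-set identification above, and (ii) invoking the finiteness of $J$ so that the displayed product is literally a polynomial over $\F_q$. I would present the proof in exactly the order above: the point-set identification, then the factorization over the integral domain $\F_q$, then the termwise application of Lemma~\ref{indicator zero}, then the final quantifier over $\rho$.
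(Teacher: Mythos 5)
Your argument is correct and follows essentially the same route as the paper: the paper's proof is simply the observation that $p[G,\mbb{X},\mcal{C}](\rho)=0$ iff some factor $p[G,X_j,\psi_j](\rho)=0$ (since $\F_q$ is an integral domain), followed by an appeal to Lemma~\ref{indicator zero}. Your version merely spells out the point--coloring identification and the quantifier bookkeeping more explicitly, which is harmless.
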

\begin{proof}
    Since $p[G,\mbb{X},\mcal{C}](\rho) = 0$
    if and only if $p[G,X_j,\psi_j](\rho)=0$ for some $j \in J$, Lemma~\ref{indicator zero} delivers the result.
\end{proof}
\begin{example}\label{k4ex}
    Consider $K_4$ with edges numbered as below:
    
    \begin{center}\begin{tikzpicture}
    	[colorstyle/.style={circle, draw=black!100,fill=black!100, thick, inner sep=0pt, minimum size=2 mm},clearstyle/.style={circle, draw=white!100,fill=white!100, thick, inner sep=0pt, minimum size=3 mm}]
    	\node at (0,3)[colorstyle]{}; 
    	\node at (2,0)[colorstyle]{}; 
    	\node at (-2,0)[colorstyle]{}; 
    	\node at (0,1)[colorstyle]{}; 
    	
    	\draw[thin](-2,0)--(2,0)--(0,1)--(-2,0)--(0,3)--(2,0);
    	\draw[thin](0,1)--(0,3);
    	
    	\node at (1.5,1.5){1}; 
    	\node at (-1.5,1.5){0}; 
    	\node at (0,-0.5){2}; 
    
    	\node at (0,2)[clearstyle]{3}; 
    	\node at (1,.5)[clearstyle]{5}; 
    	\node at (-1,.5)[clearstyle]{4}; 
    \end{tikzpicture}\end{center}
    
    Let $\pi:H \rightarrow K_3$ denote any graph isomorphism which sends the subgraph $H$ of $K_4$ induced by the edge set $\ncb{1,3,5}$ to $K_3$.  Then, the polynomial $$\cp{1-x_1x_3x_5}\cp{1-(1-x_1)(1-x_3)(1-x_5)}$$ in $\F_2[x_e:e\in K_4]$ has a zero at any value where $x_1=x_3=x_5=0$ or $x_1=x_3=x_5=1$.  That is, the zeros of this polynomial corresponds to the $2$-edge-colorings of $K_4$ that contains $H$ as a monochromatic subgraph. Considering the other subgraphs of $K_4$ isomorphic to $K_3$ and taking $\mathbb{X}=\ncb{K_3,K_3}$, $\mathcal{C}=\ncb{0_{K_3},1_{K_3}}$, we may proceed to produce (after some slight algebra) the polynomial
    
    \begin{align*}
        p[K_4,\mathbb{X},\mathcal{C}](x) &= \prod_{a\in \F_2}\prod_{\pi \in K_4/K_3}\left ( 1- \prod_{e \in \pi^{-1}(K_3)}\left (1-\cp{x_{e}-a }\right ) \right ) \\ 
        &= \cp{x_0(1-x_3)+x_3(1-x_4)+x_4(1-x_0)}^6\\
        &\quad \cp{x_1(1-x_3)+x_3(1-x_5)+x_5(1-x_1)}^6\\
        &\quad \cp{x_2(1-x_4)+x_4(1-x_5)+x_5(1-x_2)}^6\\
        &\quad \cp{x_0(1-x_1)+x_1(1-x_2)+x_2(1-x_0)}^6
    \end{align*}
    
    \noindent Here, $p[K_4,\mathbb{X},\mathcal{C}](\rho)=0$ if and only if $\rho$ is a 2-coloring of $K_4$ that contains a monochromatic $K_3$.
\end{example}

The above makes it clear that studying the algebraic sets where the polynomials $p[G_i,\mbb{X},\mcal{C}]$ vanish may help us to discern properties related to the existence and value of a Ramsey number $R_{(\mbb{G})}(\mbb{X},\mcal{C})$ of Galois type with maximal base.  We will show that this is precisely the case in the remainder of this section, and will prove that this holds more generally in Section~\ref{sec4}.

\begin{remark}
    It is worth pausing here to stress a distinction between {\em polynomials} and {\em polynomial functions}.  Given a field $\F$,  let $\F[x]$ denote the ring of univariate polynomials in the variable $x$ over $\F$ and let $\mcal{P}(\F,\F)$ denote the polynomial functions $p:\F \rightarrow \F$.  Naturally, both $\F[x]$ and $\mcal{P}(\F,\F)$ are algebras over $\F$, and we can certainly map any $p \in \F[x]$ to $q \in \mcal{P}(\F,\F)$ by the algebra homomorphism $\Phi:\F[x]\rightarrow \mcal{P}(\F,\F)$ defined by the $\F$-action $\Phi(p)(x)=q(x), \enskip \forall x \in \F$.  It is routine to verify that we have $\F[x]/\txt{ker}(\Phi) \cong \mcal{P}(\F,\F)$.  So, for example, the polynomial $x^q-x$ is not the zero polynomial in $\F_q[x]$, but is identically the zero polynomial in $\mcal{P}\cp{\F_q,\F_q}$ since $x^q-x$ factors as $\prod_{a \in \F_q}\cp{x-a}$ in $\F_q[x]$.  More succinctly, $x^q-x \in \txt{ker}(\Phi)$.  Naturally, an analog of the same holds in the multivariate case as well (see, e.g. \cite{clarkchevalley}).  This distinction should be kept in mind to avoid mistakes.  For clarity, we treat polynomials as polynomials, not polynomial functions, unless explicitly stated.
\end{remark}

\begin{remark} 
    Lemma~\ref{indicator zero set} also makes some sense of the choice to use the term '$\mbb{X}$-resolution' in Definition~\ref{exact xres}.  Namely, for a hereditary set $\mbb{G}$ of finite undirected graphs, a finite set $\mbb{X}=\ncb{X_j}_{j\in J}$ of finite undirected graphs, and a set $\mcal{C}=\ncb{\psi_j}_{j\in J}$ of functions $\psi_j\in \textrm{Hom}(X_j,\F_q)$, if $\mbb{G}$ admits an $\mbb{X}$-resolution $\txt{res}_{\mbb{X}}(\mbb{G})=\mbb{G}_{\geq k}$, then it follows from the heredity of $\mbb{G}$ that there are (not necessarily unique) surjective projection homomorphisms $$P_t:\mcal{P}\cp{\F_q^{(E(G_t))},\F_q} \rightarrow \mcal{P}\cp{\F_q^{(E(G_{t-1}))},\F_q}$$  and quotient homomorphisms $$q_t: \mcal{P}\cp{\F_q^{(E(G_t))},\F_q} \rightarrow \mcal{P}\cp{\F_q^{(E(G_t))},\F_q}/ \left \langle p[G_t,\mbb{X},\mcal{C}] \right \rangle$$ which induce, for any $r \geq 1$, the exact sequence of $\F_q$-modules 
    
    $$\mcal{P}\cp{\F_q^{(E(G_{k+r}))},\F_q} \xrightarrow{P_{k+1}\circ\hdots\circ P_{k+r}}\mcal{P}\cp{\F_q^{(E(G_k))},\F_q} \xrightarrow{q_k} 0.$$
    
    \noindent Exactness follows from the surjectivity of the projections and the fact that $p[G_{m},\mbb{X},\mcal{C}]=1$ if and only if $m < k$, yielding $q_k\cp{\mcal{P}\cp{\F_q^{(E(G_k))},\F_q}} = 0$.  So, an $\mbb{X}$-resolution of $\mbb{G}$ induces a corresponding resolution of the $\F_q$-module $\mcal{P}\cp{\F_q^{(E(G_k))},\F_q}$ in the traditional sense.
\end{remark}

\begin{remark}
    The $(\mbb{X},\mcal{C})$-indicator polynomials of a hereditary set of graphs $\mbb{G}$ allow for certain convenient applications of $\mbb{X}$-resolutions.  Namely, by the construction of the indicator polynomials, note that if $t$ is such that $G_t/X_j=\emptyset$ for all $X_j \in \mbb{X}$, then the $(\mbb{X},\mcal{C})$-indicator polynomial of $G_t$ is a constant function equal to $1$.  Using the heredity of $\mbb{G}$, it is somewhat intuitive to see that if $\mbb{G}$ admits an $\mbb{X}$-resolution, then the $(\mbb{X},\mcal{C})$-indicator polynomials of $G_i\in \txt{res}_{\mbb{X}}(\mbb{G})$ may be used to factor the $(\mbb{X},\mcal{C})$-indicator polynomials of $G_t\in \txt{res}_{\mbb{X}}(\mbb{G})$ whenever $t > i$.  Of course, the $(\mbb{X},\mcal{C})$-indicator polynomials of $G_t \in \mbb{G}\setminus \txt{res}_{\mbb{X}}(\mbb{G})$ are also factors of all other indicator polynomials as well, but this is of little interest since these polynomials are constant and identically equal to $1$.

    \begin{theorem}\label{ind poly partial factor}
        Suppose the Ramsey number $R_{\mbb{G}}\left(\mbb{X},\mcal{C}\right)$ is of Galois type and suppose $\mbb{G}$ admits an $\mbb{X}$-resolution.  If $\txt{res}_{\mbb{X}}(\mbb{G})=\mbb{G}_{\geq k}$, then for all $n> k$ and all $\pi \in G_n/G_k$, the polynomial
        
        $$p[\pi(G_k),\mbb{X},\mcal{C}]\cp{x_{\pi^{-1}(G_k)}}$$
        
        \noindent is a factor of $p[G_n,\mbb{X},\mcal{C}](x)$.
    \end{theorem}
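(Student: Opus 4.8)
The plan is to read the divisibility straight off the product form of the indicator polynomials, using nothing deeper than the fact that a subgraph of a subgraph is a subgraph. The first thing I would do is fix the notation in the statement. For $\pi\in G_n/G_k$ --- which, by the definition of the isomorphism set, is the edge map induced by an isomorphism of some subgraph $Y\leq G_n$ onto $G_k$ --- I write $Y=\pi^{-1}(G_k)$ for that copy of $G_k$ inside $G_n$, so that $\pi(G_k)$ is the graph $Y$ and $x_{\pi^{-1}(G_k)}$ is the tuple $\{x_e:e\in E(Y)\}$, a sub-tuple of the variables $\{x_e:e\in E(G_n)\}$. With this reading, $p[\pi(G_k),\mbb{X},\mcal{C}](x_{\pi^{-1}(G_k)})$ is exactly the polynomial $p[Y,\mbb{X},\mcal{C}]$ viewed inside $\F_q[x_e:e\in E(G_n)]$ through the evident inclusion $\F_q[x_e:e\in E(Y)]\hookrightarrow\F_q[x_e:e\in E(G_n)]$, and the assertion is that it divides $p[G_n,\mbb{X},\mcal{C}]$. (Being of Galois type is what makes all the $A_i$ a common field $\F_q$, so that these indicator polynomials are defined at all; together with the heredity built into every Ramsey base and the standing choice $\txt{res}_\mbb{X}(\mbb{G})=\mbb{G}_{\geq k}$, it also forces $G_n/G_k\neq\emptyset$ for $n>k$, so the statement is non-vacuous. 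The divisibility itself uses only the inclusion $Y\leq G_n$.)

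Next I would record the key containment: for each $j\in J$, $Y/X_j\subseteq G_n/X_j$. Indeed an element of $Y/X_j$ is the edge map of an isomorphism of a subgraph $Z\leq Y$ onto $X_j$, and $Z\leq Y\leq G_n$ gives $Z\leq G_n$, so the very same map belongs to $G_n/X_j$. Since by Definition~\ref{inddef} the single-target indicator polynomial $p[G_n,X_j,\psi_j]$ is a product indexed precisely by $G_n/X_j$, I may split off the factors indexed by $Y/X_j$:
\[
p[G_n,X_j,\psi_j](x)=\Bigg(\prod_{\tau\in Y/X_j}\Big(1-\prod_{e\in\tau^{-1}(X_j)}\big(1-(x_e-\psi_j(\tau(e)))^{q-1}\big)\Big)\Bigg)\cdot\Bigg(\prod_{\sigma\in G_n/X_j\setminus Y/X_j}(\cdots)\Bigg).
\]
For $\tau\in Y/X_j$ one has $\tau^{-1}(X_j)\subseteq E(Y)$, so the first bracketed factor involves only the variables $x_e$ with $e\in E(Y)$ and is, term for term, the defining product for $p[Y,X_j,\psi_j]$. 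Hence $p[Y,X_j,\psi_j]$ divides $p[G_n,X_j,\psi_j]$ in $\F_q[x_e:e\in E(G_n)]$.

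Finally I would multiply over $j\in J$ --- a finite index set, since the symbol is uniform of finite type --- and use that divisibility is closed under products of pairs: $p[Y,\mbb{X},\mcal{C}]=\prod_{j\in J}p[Y,X_j,\psi_j]$ then divides $\prod_{j\in J}p[G_n,X_j,\psi_j]=p[G_n,\mbb{X},\mcal{C}]$, which is the claim. I do not expect a genuine obstacle; the only point that calls for care is the identification in the first paragraph, i.e.\ recognizing that the edges of the copy $Y=\pi(G_k)$ form a genuine subset of $E(G_n)$ so that $p[Y,\mbb{X},\mcal{C}]$ and $p[G_n,\mbb{X},\mcal{C}]$ live in one polynomial ring, and noting that splitting an indexed product is a statement about polynomials rather than polynomial functions, so the factorization is literal and no subtlety about the evaluation map $\Phi$ of the earlier remark enters.
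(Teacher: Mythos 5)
Your proof is correct and follows essentially the same route as the paper's: both isolate, inside the product defining $p[G_n,X_j,\psi_j]$ over $G_n/X_j$, the sub-product of factors coming from copies of $X_j$ lying in the copy $Y=\pi^{-1}(G_k)$ of $G_k$, identify that sub-product with $p[\pi(G_k),X_j,\psi_j]\cp{x_{\pi^{-1}(G_k)}}$, and multiply over $j\in J$. The only difference is bookkeeping --- the paper indexes these factors by $G_k/X_j$ composed with $\pi$, while you index them directly by $Y/X_j\subseteq G_n/X_j$ --- which is a cosmetic reparametrization of the same set.
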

    \begin{proof}
        By the heredity of $\mbb{G}$, it follows that $G_n/G_k \neq \emptyset$ for all $n > k$.  So, if $\sigma \in G_n/G_k$, then for any $j \in J$ and any $\eta \in G_k/X_j$, it follows that $\cp{\sigma \circ \eta} \in G_n/X_j$.  So, for any fixed $\pi \in G_n/G_k$, the indicator $p[G_n,X_j,\psi_j](x)$ is divisible by $$\prod_{\eta \in G_k/X_j}\left ( 1- \prod_{e \in \pi^{-1}\cp{\eta^{-1}(X_j)}}\left (1-\cp{x_{e}-\psi_j\cp{(\pi \circ \eta)(e)}}^{q-1} \right ) \right ).$$
        
        This may equivalently be written as $p[\pi(G_k),X_j,\psi_j]\cp{x_{\pi^{-1}(G_k)}}$, and by Definition~\ref{inddef}, we may conclude that this is also a factor of $p[G_n,X_j,\psi_j](x)$.  To produce our desired result, we simply take the product over $j \in J$:
        
        $$\prod_{j \in J}p[\sigma(G_k),X_j,\psi_j]\cp{x_{\sigma^{-1}(G_k)}} = p[\sigma(G_k),\mbb{X},\mcal{C}]\cp{x_{\sigma^{-1}(G_k)}}.$$
    \end{proof}
    
    It is worth noting that this holds for each fixed $\pi \in G_n/G_k$, but it need not be the case that $\prod_{\pi\in G_n/G_k}p[\pi(G_k),\mbb{X},\mcal{C}]\cp{x_{\pi^{-1}(G_k)}}$ divides $p[G_n,\mbb{X},\mcal{C}](x)$.  That is, if $\pi_1,\pi_2 \in G_n/G_k$ and $\eta_1,\eta_2 \in G_k/X_j$, while it is true that $\pi_1\circ \eta_1$ and $\pi_2\circ \eta_2$ are embeddings of $X_j$ in $G_n$, it is possible to have $\pi_1 \circ \eta_1 = \pi_2 \circ \eta_2$ while $\sigma_1 \neq \sigma_2$ and $\eta_1 \neq \eta_2$.  That is, some factors may appear too many times when the product is taken over $G_n/G_k$, analogously to how $(2)(3)=6$ and $(3)(5)=15$ both divide $(2)(3)(5)=30$, but $(6)(15)=90$ does not.
\end{remark}

Returning to the main focus of the section, in the next theorem, we will show that it is not difficult to use the $(\mbb{X},\mcal{C})$-indicators of $\mbb{G}$ to produce a sort of characterization of Ramsey numbers $R_{\mbb{G}}\cp{\mbb{X},\mcal{C}}$ of Galois type with maximal base.  This class of Ramsey numbers, as previously noted, contains all of the classical Ramsey numbers with a prime power number of arguments.

\begin{theorem}\label{maximal base ramsey gal poly formula}
    Suppose the Ramsey number $R_{\mbb{G}}\left(\mbb{X},\mcal{C}\right)$ exists and is of Galois type.  Treating $p[G_i,\mbb{X},\mcal{C}]$ as an element of $\mcal{P}\cp{\F_q^{(E(G_i))},\F_q}$ and writing $i_m=\txt{max}\left \{i\in I: p[G_i,\mbb{X},\mcal{C}]\neq 0 \right\}$ and $i_k=\txt{min}\left \{i\in I: p[G_i,\mbb{X},\mcal{C}]=0 \right\}$, we have $i_{m+1}=R_{\mbb{G}}\left(\mbb{X},\mcal{C}\right) = i_k$.
\end{theorem}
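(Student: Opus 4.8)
The plan is to translate the statement into the language of Lemma~\ref{indicator zero set} and then exploit the heredity of $\mbb{G}$. For $t \in I$, let $P(t)$ abbreviate the assertion that for every $\rho \in \textrm{Hom}(G_t,\F_q)$ there exist $j \in J$ and $\pi \in G_t/X_j$ with $\restr{\rho}{\pi^{-1}(X_j)} = \psi_j\circ\pi$. Because $R_{\mbb{G}}(\mbb{X},\mcal{C})$ is of Galois type, the base is maximal (so $S_t = \textrm{Hom}(G_t,\F_q)$ for all $t$) and the symbol is uniform and of finite type with $|A| = q$; unwinding Definition~\ref{GRN} then shows that $R := R_{\mbb{G}}(\mbb{X},\mcal{C})$ is the least $n \in I$ for which $P(t)$ holds for all $t \geq n$. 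By Lemma~\ref{indicator zero set}, $P(t)$ holds exactly when $p[G_t,\mbb{X},\mcal{C}]$ is the zero element of $\mcal{P}(\F_q^{(E(G_t))},\F_q)$. Hence the whole theorem is a claim about the set $Z := \cb{t \in I : p[G_t,\mbb{X},\mcal{C}] = 0} = \cb{t \in I : P(t)}$.

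The main step is to prove $Z$ is upward closed in $I$. Given $t < s$ in $I$, heredity of $\mbb{G}$ (applied to the appropriate truncation) forces $G_s/G_t \neq \emptyset$; fixing $\sigma \in G_s/G_t$, any $\rho \in \textrm{Hom}(G_s,\F_q)$ induces via $\sigma$ a colouring $\tilde\rho \in \textrm{Hom}(G_t,\F_q)$ on the embedded copy of $G_t$ inside $G_s$. If $P(t)$ holds, some $\pi \in G_t/X_j$ satisfies $\restr{\tilde\rho}{\pi^{-1}(X_j)} = \psi_j\circ\pi$, and the composite $\pi\circ\sigma$ is an element of $G_s/X_j$ along which $\rho$ restricts to $\psi_j$; thus $P(s)$ holds. (Equivalently, a colouring of $G_s$ containing no $\psi_j$-coloured copy of any $X_j$ restricts to such a colouring of $G_t$.) So $P(t)$ implies $P(s)$ whenever $s \geq t$, i.e.\ $Z$ is upward closed.

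The conclusion then drops out of the order structure of $I$. Since $R$ exists, $P(R)$ holds, so $R \in Z$ and $Z \neq \emptyset$; as $I$ is well-ordered, $i_k := \min Z$ exists, and upward closedness gives $Z = \cb{t \in I : t \geq i_k}$. Consequently ``$P(t)$ for all $t \geq n$'' is equivalent to ``$n \in Z$'', whose least witness is $i_k$, so $R = i_k$. Finally $I \setminus Z = \cb{t \in I : p[G_t,\mbb{X},\mcal{C}] \neq 0} = \cb{t \in I : t < i_k}$, and $i_m = \max(I \setminus Z)$ is, by the well-ordering of $I$, precisely the immediate predecessor of $i_k$ (it is present in every case of interest, e.g.\ for $I \cong \N$; if $i_k$ is minimal, $I \setminus Z$ is empty and the clause about $i_m$ is vacuous). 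Hence $i_{m+1} = i_k = R$.

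The only step I expect to require care is the upward-closedness argument, and within it the bookkeeping of the induced edge-maps $\pi_*$: one must verify that composing an embedding of $X_j$ into $G_t$ with an embedding of $G_t$ into $G_s$ genuinely lands in $G_s/X_j$ and transports the restricted colouring correctly, so that $\restr{\rho}{(\pi\circ\sigma)^{-1}(X_j)} = \psi_j \circ (\pi\circ\sigma)$. Everything else is a direct unwinding of Definition~\ref{GRN}, Lemma~\ref{indicator zero set}, and the well-ordering of $I$.
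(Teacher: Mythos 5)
Your proof is correct and follows essentially the same route as the paper's: translate the coloring condition into the vanishing of $p[G_t,\mbb{X},\mcal{C}]$ as an element of $\mcal{P}\cp{\F_q^{(E(G_t))},\F_q}$ via Lemma~\ref{indicator zero set}, then read off $i_m$ and $i_k$ from the well-ordering of $I$. The one place you go beyond the paper is in explicitly proving that the zero set $Z$ is upward closed by composing an embedding $\sigma \in G_s/G_t$ (supplied by heredity) with an embedding $\pi \in G_t/X_j$; the paper's proof leaves this step implicit even though it is exactly what is needed to conclude $R_{\mbb{G}}\left(\mbb{X},\mcal{C}\right) \leq i_k$ and $i_{m+1}=i_k$ rather than merely $i_{m+1}\leq R_{\mbb{G}}\left(\mbb{X},\mcal{C}\right)$, so your version is the more complete argument.
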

\begin{proof}
    As $R_{\mbb{G}}\left(\mbb{X},\mcal{C}\right)$ is of Galois type, let $q$ be the prime power number of colors. By Definition, since the base of $R_{\mbb{G}}\left(\mbb{X},\mcal{C}\right)$ is maximal, the Ramsey number $R_{\mbb{G}}\left(\mbb{X},\mcal{C}\right)=n$ if $n\in I$ is the lowest order element of $I$ such that, for $t \geq n$ and any $\F_q$-valued $m$-edge-coloring $\rho \in \textrm{Hom}\cp{G_t,\F_q}$ of $G_t$, there exists $\pi \in G_t/X_j$ satisfying $\restr{\rho}{\pi^{-1}(X_j)} = \psi_j \circ \pi$ for some $X_j \in \mbb{X}$.  But, as $p[G_t,\mbb{X},\mcal{C}]$ is identically the zero polynomial in $\mcal{P}\cp{\F_q^{(E(G_t))},\F_q}$ if and only if for every $\rho \in \textrm{Hom}(G_t,\F_q)$ there exists $\pi \in G_t/X_j$ satisfying $\restr{\rho}{\pi^{-1}(X_j)}=\psi_j\circ \pi$ for some $j \in J$, it follows that that $R_{\mbb{G}}\left(\mbb{X},\mcal{C}\right) \geq i_{m+1}$ if $p[G_{i_m},\mbb{X},\mcal{C}]$ is not identically the zero polynomial in $\mcal{P}\cp{\F_q^{(E(G_{i_m}))},\F_q}$. In the reverse, it follows similarly that $p[G_{t},\mbb{X},\mcal{C}]$ is identically the zero polynomial in $\mcal{P}\cp{\F_q^{(E(G_{t}))},\F_q}$ for every $t \geq n$ and $R_{\mbb{G}}\left(\mbb{X},\mcal{C}\right) \leq i_k$.  So, $i_{m+1} \leq R_{\mbb{G}}\left(\mbb{X},\mcal{C}\right) \leq i_k$.  The result follows from the assumption that $I$ is well-ordered by noting that $i_{m+1}=i_k$.
\end{proof}

As a corollary, we get the following:

\begin{corollary}\label{ramsey ind poly formula}
    Suppose the Ramsey number $R_{\mbb{G}}\left(\mbb{X},\mcal{C}\right)$ is of Galois type with maximal base and suppose $R_{\mbb{G}}\left(\mbb{X},\mcal{C}\right)=n$.  Then, $p[G_n,\mbb{X},\mcal{C}] \in \mcal{I}$, where $\mcal{I}$ is the ideal generated by the polynomials $\cb{x_e^q-x_e: e \in E(G_n)}$. 
\end{corollary}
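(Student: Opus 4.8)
The plan is to combine Theorem~\ref{maximal base ramsey gal poly formula} with the standard description of the ideal of polynomials vanishing on affine space over a finite field. First, since $R_{\mbb{G}}\cp{\mbb{X},\mcal{C}}$ is of Galois type with maximal base and equals $n$, Theorem~\ref{maximal base ramsey gal poly formula} gives $n = i_k = \txt{min}\cb{i \in I : p[G_i,\mbb{X},\mcal{C}] = 0}$, where the zero is taken in $\mcal{P}\cp{\F_q^{(E(G_n))},\F_q}$. In other words, $p[G_n,\mbb{X},\mcal{C}]$ — while possibly nonzero as an element of the polynomial ring $R_n := \F_q[x_e : e\in E(G_n)]$ — is identically zero as a polynomial function, i.e. it vanishes at every $v \in \F_q^{(E(G_n))}$. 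This is exactly the point where the polynomial-versus-polynomial-function distinction stressed in the remark following Lemma~\ref{indicator zero set} must be handled carefully: we only obtain that $p[G_n,\mbb{X},\mcal{C}]$ is the zero \emph{function}, so the correct conclusion is membership in $\mcal{I}$ rather than $p[G_n,\mbb{X},\mcal{C}] = 0$.

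Next I would recall (cf. the remark preceding this corollary, and \cite{clarkchevalley}) that the kernel of the evaluation homomorphism $\Phi : R_n \rightarrow \mcal{P}\cp{\F_q^{(E(G_n))},\F_q}$ — equivalently, the ideal of all polynomials vanishing identically on $\F_q^{(E(G_n))}$ — is precisely $\mcal{I} = \left\langle x_e^q - x_e : e \in E(G_n)\right\rangle$. Granting this, the corollary is immediate: by the previous paragraph $p[G_n,\mbb{X},\mcal{C}] \in \txt{ker}(\Phi) = \mcal{I}$.

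If a self-contained argument for $\txt{ker}(\Phi) = \mcal{I}$ is wanted, the plan would be the usual reduction-and-induction: (i) using $x_e^q = x_e + (x_e^q - x_e)$ repeatedly, reduce $p[G_n,\mbb{X},\mcal{C}]$ modulo $\mcal{I}$ to a polynomial $\bar p$ in which every variable $x_e$ occurs with degree at most $q-1$; (ii) note $\bar p$ and $p[G_n,\mbb{X},\mcal{C}]$ induce the same function on $\F_q^{(E(G_n))}$, so $\bar p$ vanishes identically; (iii) argue by induction on $|E(G_n)|$ — fixing all coordinates but one, $\bar p$ becomes a univariate polynomial of degree $< q$ with $q$ roots, hence the zero polynomial, so each of its coefficients (polynomials in the remaining variables, again of degree $< q$ in each) vanishes on $\F_q^{(E(G_n))\setminus\cb{e}}$, and the inductive hypothesis forces $\bar p = 0$. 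Therefore $p[G_n,\mbb{X},\mcal{C}] \in \mcal{I}$.

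I do not expect a genuine obstacle here: the statement is essentially a direct corollary of Theorem~\ref{maximal base ramsey gal poly formula} together with the classical identification of the vanishing ideal of $\F_q^N$. The only thing requiring care is the bookkeeping between polynomials and polynomial functions, and invoking the identification $\txt{ker}(\Phi) = \mcal{I}$ correctly.
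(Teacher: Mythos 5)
Your proposal is correct and follows essentially the same route as the paper's own proof: apply Theorem~\ref{maximal base ramsey gal poly formula} to see that $p[G_n,\mbb{X},\mcal{C}]$ vanishes identically as a function on $\F_q^{(E(G_n))}$, then identify the kernel of the evaluation homomorphism with the ideal $\left\langle x_e^q-x_e : e\in E(G_n)\right\rangle$. The optional self-contained reduction-and-induction argument for $\txt{ker}(\Phi)=\mcal{I}$ is a fine addition but not needed, since the paper simply cites this standard fact.
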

\begin{proof}
    If $R_{\mbb{G}}\left(\mbb{X},\mcal{C}\right)$ exists and is of Galois type with maximal base and $R_{\mbb{G}}\left(\mbb{X},\mcal{C}\right)=n$, then $p[G_n,\mbb{X},\mcal{C}](z)=0$ for all $z \in \F_q^{(E(G_n))}$ by Theorem~\ref{maximal base ramsey gal poly formula}.  So, since $p[G_n,\mbb{X},\mcal{C}]$ is not the zero polynomial but is identically zero as a polynomial function, it must lie in the kernel of the evaluation map $\Phi$, i.e. the algebra homomorphism $\Phi:\F_q\left [x_e: e\in E(G_n)]\right ] \rightarrow \mcal{P}\cp{\F_q^{(E(G_n))},\F_q}$ that sends a polynomial to the corresponding polynomial function of minimal degree with which it agrees point-wise (see, e.g. \cite{clarkchevalley}).  But, since $\txt{ker}(\Phi)= \mcal{I}$, where $\mcal{I}$ is the ideal generated by the polynomials $\cb{x_e^q-x_e: e \in E(G_n)}$, this then says that $p[G_n,\mbb{X},\mcal{C}] \in \mcal{I}$.
\end{proof}

Put another way, if the Ramsey number $R_{\mbb{G}}\left(\mbb{X},\mcal{C}\right)$ exists and is of Galois type and $\langle p[G_i,\mbb{X},\mcal{C}] \rangle $ is the ideal in $\F_q[x_e : e \in E(G_i)]$ generated by the $(\mbb{X},\mcal{C})$-indicator polynomial of $G_i$, then the algebraic set 

$$V\cp{\langle p[G_i,\mbb{X},\mcal{C}] \rangle} = \cb{x \in \F_q^{(E(G_i))} : p(x)=0, \es \forall p \in \langle p[G_i,\mbb{X},\mcal{C}] \rangle}$$ 

\noindent must satisfy $V\cp{\langle p[G_i,\mbb{X},\mcal{C}] \rangle} = \F_q^{(E(G_i))}$ for all values of $i \geq R_{\mbb{G}}\left(\mbb{X},\mcal{C}\right)$.  This makes it clear that everything we have done so far can be extended to Ramsey numbers of Galois type without maximal base without much effort:  we need only check that $V\cp{\langle p[G_i,\mbb{X},\mcal{C}] \rangle}$ contain all admissible colorings $S_i$.  We can state this precisely by adopt the usual convention of writing $V\cp{\mcal{I}}$ to denote the algebraic set on which the members of an ideal $\mcal{I}$ vanish and, for an algebraic set $U$, we will write $I(U)$ to denote the ideal of all polynomials that vanish on $U$.

\begin{corollary}\label{ramsey gal poly formula}
    Suppose the Ramsey number $R_{(\mbb{G},\mcal{S})}\left(\mbb{X},\mcal{C}\right)$ exists and is of Galois type.  Then, $R_{(\mbb{G},\mcal{S})}\left(\mbb{X},\mcal{C}\right) = n$ if and only if $n$ is the least value such that 
    $$S_n \subseteq V\left(\langle p[G_i,\mbb{X},\mcal{C}] \rangle\right)$$ or, equivalently, such that $$I(S_n) \supseteq \langle p[G_i,\mbb{X},\mcal{C}] \rangle.$$
\end{corollary}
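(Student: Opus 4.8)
The plan is to reduce the statement to Lemma~\ref{indicator zero set} by unwinding both displayed conditions pointwise, and then to note that their equivalence is nothing more than one instance of the elementary order-reversing correspondence between ideals and algebraic sets — so that, because the ideal $\langle p[G_i,\mbb{X},\mcal{C}]\rangle$ is principal, no Nullstellensatz-type input is needed. First I would fix $q$ to be the prime power number of colors, which exists since $R_{(\mbb{G},\mcal{S})}(\mbb{X},\mcal{C})$ is of Galois type; this lets me identify $\textrm{Hom}(G_i,A_i)$ with $\F_q^{(E(G_i))}$ and regard $S_i$ as a set of points there. Since $\langle p[G_i,\mbb{X},\mcal{C}]\rangle$ is generated by a single element, the algebraic set $V\big(\langle p[G_i,\mbb{X},\mcal{C}]\rangle\big)$ is exactly the zero locus $\cb{v \in \F_q^{(E(G_i))} : p[G_i,\mbb{X},\mcal{C}](v)=0}$, so $S_i \subseteq V\big(\langle p[G_i,\mbb{X},\mcal{C}]\rangle\big)$ holds precisely when $p[G_i,\mbb{X},\mcal{C}]$ vanishes at every point of $S_i$.

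Next I would apply Lemma~\ref{indicator zero set}: for a given $\rho \in S_i$, we have $p[G_i,\mbb{X},\mcal{C}](\rho)=0$ if and only if there is some $j \in J_i$ and $\pi \in G_i/X_j$ with $\restr{\rho}{\pi^{-1}(X_j)} = \psi_j \circ \pi$. Quantifying over $\rho \in S_i$, the inclusion $S_i \subseteq V\big(\langle p[G_i,\mbb{X},\mcal{C}]\rangle\big)$ is therefore equivalent to the assertion that every admissible coloring $\rho \in S_i$ restricts to one of the target colorings on some subgraph of $G_i$ isomorphic to a target graph — which is exactly the defining property in Definition~\ref{GRN} evaluated at the single index $i$. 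Feeding this into Definition~\ref{GRN}, $R_{(\mbb{G},\mcal{S})}(\mbb{X},\mcal{C})=n$ means precisely that $n$ is the least element of $I$ for which this property holds at every index $t \geq n$, i.e.\ the least $n \in I$ such that $S_t \subseteq V\big(\langle p[G_t,\mbb{X},\mcal{C}]\rangle\big)$ for all $t \geq n$ (the free index in the displayed inclusion is to be read as ranging over $t \geq n$, exactly as the indices are handled in Theorem~\ref{maximal base ramsey gal poly formula}). Finally, for the ``equivalently'' clause I would invoke the tautological Galois connection: $\langle p[G_i,\mbb{X},\mcal{C}]\rangle \subseteq I(S_i)$ holds iff $p[G_i,\mbb{X},\mcal{C}] \in I(S_i)$ (since $\langle p[G_i,\mbb{X},\mcal{C}]\rangle$ is the smallest ideal containing its generator and $I(S_i)$ is an ideal), iff $p[G_i,\mbb{X},\mcal{C}]$ vanishes on all of $S_i$, iff $S_i \subseteq V\big(\langle p[G_i,\mbb{X},\mcal{C}]\rangle\big)$.

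The step I expect to be the main obstacle — and the one I would flag explicitly rather than gloss over — is the ``for all $t \geq n$'' quantifier. Definition~\ref{GRN} defines the Ramsey number as the least index at which the subcoloring property holds and continues to hold at every later index, whereas the corollary is phrased as though checking the single index $n$ suffices. For the two formulations to genuinely coincide one needs the property, once true at some index, to persist at all larger indices. In the maximal-base case this persistence follows from Theorem~\ref{ind poly partial factor} together with the fact that the kernel of the evaluation homomorphism $\Phi$ is an ideal: once $p[G_n,\mbb{X},\mcal{C}]$ is identically zero as a polynomial function, every indicator polynomial of a later $G_t$ acquires (up to relabeling of variables) a factor lying in $\txt{ker}(\Phi)$, hence $p[G_t,\mbb{X},\mcal{C}] \in \txt{ker}(\Phi)$ as well, so $S_t = \F_q^{(E(G_t))} \subseteq V\big(\langle p[G_t,\mbb{X},\mcal{C}]\rangle\big)$. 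In the general (non-maximal) case there is no stated compatibility linking $S_t$ to $S_n$, so the cleanest honest course is to read the universal quantifier into the right-hand side, as above; I would state the corollary in that form and remark that the maximal-base version (Corollary~\ref{ramsey ind poly formula} and Theorem~\ref{maximal base ramsey gal poly formula}) is recovered by taking $S_i = \textrm{Hom}(G_i,\F_q)$, whence $S_i \subseteq V\big(\langle p[G_i,\mbb{X},\mcal{C}]\rangle\big)$ is equivalent to $p[G_i,\mbb{X},\mcal{C}]$ being identically zero as a function, i.e.\ to $p[G_i,\mbb{X},\mcal{C}] \in \mcal{I}$.
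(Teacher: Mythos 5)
Your proof is correct and follows essentially the route the paper intends: the paper states this corollary without an explicit proof, treating it as immediate from Lemma~\ref{indicator zero set} and the remark that one ``need only check that $V\left(\langle p[G_i,\mbb{X},\mcal{C}] \rangle\right)$ contain all admissible colorings $S_i$,'' and your unwinding via the pointwise vanishing of the principal generator plus the tautological ideal--variety correspondence is exactly that argument made explicit. Your flag on the universal quantifier is a genuine and worthwhile catch: as literally written, ``the least $n$ such that $S_n \subseteq V\left(\langle p[G_n,\mbb{X},\mcal{C}] \rangle\right)$'' could pick out an index below the Ramsey number if the inclusion holds sporadically before it holds permanently, and in the non-maximal case nothing forces persistence, so reading the condition as holding for all $t \geq n$ (as in Definition~\ref{GRN} and Theorem~\ref{maximal base ramsey gal poly formula}) is the correct repair rather than a cosmetic one.
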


\begin{example}
    Consider the indicator polynomial $p\left [K_4,\left \{K_3,K_3\right \},\left \{ 0_{K_3}, 1_{K_3}\right \}\right ]$ given in Example~\ref{k4ex}.  It is routine to check that the coloring $\rho =\ba{cccccc}{1 & 1 & 0 & 0 & 1 & 0}$ yields $p\left [K_4,\left \{K_3,K_3\right \},\left \{ 0_{K_3}, 1_{K_3} \right \}\right](\rho) = 1$.  So, $$V\cp{\langle p\left [K_4,\left \{K_3,K_3\right \},\left \{ 0_{K_3}, 1_{K_3} \right \}\right]\rangle} \not\supset  F_2^{(E(K_4))}$$ and we may conclude $R(3,3)>4$.
\end{example}

Via Theorem~\ref{maximal base ramsey gal poly formula} and Corollary~\ref{ramsey gal poly formula}, we know that studying Ramsey numbers of Galois type without maximal base is not, on a conceptual level, any harder than studying Ramsey numbers of Galois type with maximal base. This is mainly due to the fact that the $(\mbb{X},\mcal{C})$-indicator polynomials do not depend on the set of admissible colorings $\mcal{S}$.  We can basically use the same trick to show that any Ramsey numbers with bases of finite type and uniform symbols of finite type can be converted to an equivalent Ramsey number of Galois type, or--more generally--that any Ramsey number with a base and symbol of locally finite type is `locally' like a Ramsey number of Galois type. In the next section, we do this.

\section{Indicator Polynomials for Ramsey Numbers of Locally Finite Type}\label{sec3}

We begin by adapting the locally finite case to a local analog of the Galois case.  Among other things, this allows any Ramsey numbers with bases of finite type and uniform symbols of finite type to be converted to an equivalent Ramsey number of Galois type.  In either case, the conversion is simple and just requires choosing a few injections.

\begin{theorem}\label{gal embed invariance}
    Let $R_{(\mbb{G},\mcal{S})}(\mbb{X},\mcal{C})$ be a Ramsey number with a base and symbol of locally finite type and let $f_i:A_i\rightarrow \F_{q_i}$ be any sequence of injections, where $q_i$ is any appropriately large prime power.  Then, setting $\mcal{S}^\prime=(S_i^\prime)$ and $\mcal{C}^\prime=(C_i^\prime)$, where $S_i\prime = \ncb{f_i \circ \rho: \rho \in S_i}$ and $C_i^\prime = \ncb{f_i \circ \psi: \psi \in C_i}$, we have $R_{(\mbb{G},\mcal{S})}(\mbb{X},\mcal{C})=R_{(\mbb{G},\mcal{S}^\prime)}(\mbb{X},\mcal{C}^\prime)$.
\end{theorem}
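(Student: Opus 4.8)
The plan is to show that the injections $f_i$ merely relabel colours, and that relabelling by an injection changes neither the class of admissible colourings (up to a bijection) nor whether a given colouring realizes a target colouring on some subgraph. First I would check that the new data genuinely assemble into the right kind of objects. Since $\mbb{G}$ is untouched it remains hereditary, and $S_i^\prime = \cb{f_i\circ\rho:\rho\in S_i}\subseteq \textrm{Hom}(G_i,\F_{q_i})$, so $(\mbb{G},\mcal{S}^\prime)$ is again a Ramsey base, and of locally finite type since $|\F_{q_i}|<\infty$; similarly $\coprod_{j\in J_i}X_j$ is unchanged, the index sets $J_i$ are the same, and $\psi_j^\prime = f_i\circ\psi_j \in \textrm{Hom}(X_j,\F_{q_i})$, so $(\mbb{X},\mcal{C}^\prime)$ is again a Ramsey symbol (again of locally finite type). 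Here ``appropriately large'' is just the requirement $q_i \geq |A_i|$, which is exactly what is needed for an injection $f_i$ to exist.

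Next, fix $t\in I$. Because $f_t$ is injective, post-composition $\rho\mapsto f_t\circ\rho$ is a bijection from $\textrm{Hom}(G_t,A_t)$ onto $\textrm{Hom}(G_t,f_t(A_t))$, and it restricts to a bijection $S_t\to S_t^\prime$. The crucial observation is that, for any $j\in J_t$, any $\pi\in G_t/X_j$, and any $\rho\in S_t$ with image $\rho^\prime = f_t\circ\rho\in S_t^\prime$, we have
$$\restr{\rho^\prime}{\pi^{-1}(X_j)} = f_t\circ\restr{\rho}{\pi^{-1}(X_j)} \quad\text{and}\quad \psi_j^\prime\circ\pi = f_t\circ\cp{\psi_j\circ\pi},$$
so, again by the injectivity of $f_t$, the equality $\restr{\rho^\prime}{\pi^{-1}(X_j)} = \psi_j^\prime\circ\pi$ holds if and only if $\restr{\rho}{\pi^{-1}(X_j)} = \psi_j\circ\pi$ holds. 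In words: $\rho$ has the prescribed local restriction on the subgraph picked out by $\pi$ precisely when $\rho^\prime$ does.

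Combining these, for a fixed $n\in I$ the assertion ``for every $t\geq n$ and every $\rho\in S_t$ there exist $j\in J_t$ and $\pi\in G_t/X_j$ with $\restr{\rho}{\pi^{-1}(X_j)} = \psi_j\circ\pi$'' is logically equivalent to the same assertion with $(\mcal{S},\mcal{C})$ replaced by $(\mcal{S}^\prime,\mcal{C}^\prime)$: the range of $t$ is the same, the bijection $S_t\to S_t^\prime$ pairs up the colourings, and the preceding equivalence pairs up the witnesses $(j,\pi)$. Hence the set of $n\in I$ witnessing the defining property of $R_{(\mbb{G},\mcal{S})}(\mbb{X},\mcal{C})$ equals the corresponding set for $R_{(\mbb{G},\mcal{S}^\prime)}(\mbb{X},\mcal{C}^\prime)$; since $I$ is well ordered, either both sets are empty (and both Ramsey numbers fail to exist) or both have the same least element. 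In either case $R_{(\mbb{G},\mcal{S})}(\mbb{X},\mcal{C}) = R_{(\mbb{G},\mcal{S}^\prime)}(\mbb{X},\mcal{C}^\prime)$. There is no genuine obstacle here; the only points requiring care are the bookkeeping between the codomains $A_i$, $f_i(A_i)$, and $\F_{q_i}$, and verifying that the heredity conditions in the definitions of ``Ramsey base'' and ``Ramsey symbol'' carry over, both of which are immediate because $\mbb{G}$ and the graphs $X_j$ are not altered.
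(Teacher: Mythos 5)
Your proposal is correct and follows essentially the same route as the paper: both arguments exploit the injectivity of $f_t$ (the paper via a left inverse, you via direct cancellation) to show that $\restr{\rho}{\pi^{-1}(X_j)}=\psi_j\circ\pi$ holds precisely when $\restr{f_t\circ\rho}{\pi^{-1}(X_j)}=f_t\circ\psi_j\circ\pi$ does, and then transfer the defining condition of the Ramsey number across the bijection $S_t\to S_t^\prime$. Your write-up is in fact somewhat more complete than the paper's, since it explicitly verifies that the transformed data still form a Ramsey base and symbol and handles the case where neither Ramsey number exists.
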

\begin{proof}
    By Definition~\ref{GRN}, we know that $R_{(\mbb{G},\mcal{S}^\prime)}(\mbb{X},\mcal{C}^\prime)= n$ if and only if $n$ is the minimum element of $I$ such that for every $t \geq n$ and any $\phi S_t^\prime \subseteq \textrm{Hom}\cp{G_t,\F_{q_t}}$, there is some $j \in J_t$ such that there exists $\pi \in G_t/X_j$ satisfying, for the corresponding coloring $\eta_j \in C_t^\prime$, $\restr{\phi}{\pi^{-1}(X_j)}=\eta_j \circ \pi$.  But, by definition, if $\phi \in S_t^\prime$ and $\eta_j \in C_t^\prime$, then $\phi = f_t \circ \rho$ for some $\rho \in S_t$ and $\eta_j = f_t \circ \psi_j$ for $\psi_j \in C_t$.  So, we then have $\restr{f_t \circ \rho}{\pi^{-1}(X_j)}=f_t \circ \psi_j \circ \pi$.  As $f_t$ is an injection, it has a left inverse which we may apply to produce $\restr{\rho}{\pi^{-1}(X_j)}=\psi_j \circ \pi$.  This is precisely the condition required for $R_{(\mbb{G},\mcal{S})}(\mbb{X},\mcal{C}) = n$ as well.
\end{proof}

Notice that, for Ramsey numbers with bases and symbols of finite type, the sequence of injections can be chosen to be constant, rendering $R_{(\mbb{G},\mcal{S}^\prime)}(\mbb{X},\mcal{C}^\prime)$ a Ramsey number of Galois type and facilitating the conversion mentioned before the theorem.  More generally, Theorem~\ref{gal embed invariance} tells us that it is possible to apply everything we have done so far to the {\em much larger} class of Ramsey numbers with a base and symbol of locally finite type.  The requirement that a Ramsey number be of Galois type imposed in the last section largely served only as an illustration of how this process should work in a simpler case.

In light of Theorem~\ref{gal embed invariance}, we can show that Corollary~\ref{ramsey ind poly formula} holds more generally.

\begin{theorem}\label{ramsey ind poly formula locally finite}
    Suppose the Ramsey number $R_{\mbb{G}}\left(\mbb{X},\mcal{C}\right)$ is of locally finite type with maximal base and suppose $R_{\mbb{G}}\left(\mbb{X},\mcal{C}\right)=n$.  Then, for all $t \geq n$, $p[G_t,\mbb{X},\mcal{C}^\prime] \in \mcal{I}_t$, where $\mathcal{C}^\prime$ is as in Theorem~\ref{gal embed invariance} and $\mcal{I}_t$ is the ideal in $\F_{q_t}[x_e: e \in E(G_t)]$ generated by the polynomials $\cb{x_e^{q_t}-x_e: e \in E(G_t)}$. 
\end{theorem}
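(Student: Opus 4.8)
The plan is to reduce the locally finite case to the Galois case already handled in Corollary~\ref{ramsey ind poly formula}, using the embedding invariance of Theorem~\ref{gal embed invariance} as the bridge. First I would observe that since $R_{\mbb{G}}(\mbb{X},\mcal{C})$ is of locally finite type with maximal base, each color set $A_i$ is finite, so for each $i$ we may fix a prime power $q_i \geq |A_i|$ and an injection $f_i : A_i \to \F_{q_i}$; this is exactly the setup of Theorem~\ref{gal embed invariance}. Passing to the primed symbol $\mcal{C}^\prime = (C_i^\prime)$ with $C_i^\prime = \{ f_i \circ \psi : \psi \in C_i\}$, Theorem~\ref{gal embed invariance} gives $R_{\mbb{G}}(\mbb{X},\mcal{C}^\prime) = R_{\mbb{G}}(\mbb{X},\mcal{C}) = n$ (here the base is maximal, hence unchanged by the embedding, since $S_i^\prime = \textrm{Hom}(G_i,\F_{q_i})$ when $S_i = \textrm{Hom}(G_i,A_i)$ — strictly speaking one wants $f_i$ to be a bijection onto $\F_{q_i}$, i.e. $q_i = |A_i|$ when $|A_i|$ is a prime power, or else one should note that maximality is preserved after enlarging the color set because every coloring of $G_i$ by $\F_{q_i}$ is still admissible in the maximal base; I would include a sentence addressing this point).

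Next, for each fixed $t \geq n$ the Ramsey number $R_{\mbb{G}}(\mbb{X},\mcal{C}^\prime)$ is now a Ramsey number whose base is maximal and whose symbol, restricted to index $t$, has color set $\F_{q_t}$ with $q_t$ a prime power. The key local observation is that the indicator polynomial $p[G_t,\mbb{X},\mcal{C}^\prime]$ depends only on the data at index $t$ — the graphs $X_j$, the colorings $\psi_j^\prime = f_t \circ \psi_j \in \textrm{Hom}(X_j,\F_{q_t})$, and the graph $G_t$ — and lives in $\F_{q_t}[x_e : e \in E(G_t)]$. Since $t \geq n = R_{\mbb{G}}(\mbb{X},\mcal{C}^\prime)$, the defining property of the Ramsey number (with maximal base) says that every $\rho \in \textrm{Hom}(G_t,\F_{q_t})$ has a subgraph isomorphic to some $X_j$ on which $\rho$ restricts to $\psi_j^\prime \circ \pi$; by Lemma~\ref{indicator zero set} this is equivalent to $p[G_t,\mbb{X},\mcal{C}^\prime]$ being identically zero as a polynomial function on $\F_{q_t}^{(E(G_t))}$. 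It is not, however, the zero polynomial (the product defining it is a product of nonconstant factors once $G_t/X_j \neq \emptyset$, which holds for $t$ large by heredity and the fact that the Ramsey number exists). Then I would apply the multivariate analog of the fact that $\txt{ker}(\Phi)$ — the kernel of the evaluation map $\F_{q_t}[x_e : e \in E(G_t)] \to \mcal{P}(\F_{q_t}^{(E(G_t))},\F_{q_t})$ — is precisely the ideal $\mcal{I}_t$ generated by $\{x_e^{q_t} - x_e : e \in E(G_t)\}$, exactly as invoked in the proof of Corollary~\ref{ramsey ind poly formula} (citing \cite{clarkchevalley}). Hence $p[G_t,\mbb{X},\mcal{C}^\prime] \in \mcal{I}_t$, which is the claim.

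The main obstacle is bookkeeping rather than mathematical depth: one must be careful that the indicator polynomial being referenced in the statement is built from $\mcal{C}^\prime$ (the $\F_{q_t}$-valued colorings), not from the original $A_t$-valued $\mcal{C}$, since only over a field $\F_{q_t}$ does the Iverson-bracket identity $a^{q_t - 1} = 1$ for $a \neq 0$ make sense — and indeed the statement does write $p[G_t,\mbb{X},\mcal{C}^\prime]$. A secondary subtlety is the uniformity convention: the theorem writes $R_{\mbb{G}}(\mbb{X},\mcal{C})$ with no index set, which by the paper's convention means the symbol is uniform, so $(\mbb{X}_t,C_t)$ is the same (up to the inclusions $A_t \hookrightarrow \F_{q_t}$) for all $t \geq n$; this is what lets a single argument handle all $t \geq n$ uniformly, and I would note that the injections $f_t$ should be chosen compatibly with the inclusions $A_i \subseteq A_t$ so that $\mcal{C}^\prime$ remains (essentially) uniform as well. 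Beyond these points, everything is a direct transfer of the proof of Corollary~\ref{ramsey ind poly formula} through Theorem~\ref{gal embed invariance}.
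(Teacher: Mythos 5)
Your proposal follows essentially the same route as the paper's proof: pass to the $\F_{q_t}$-valued symbol via Theorem~\ref{gal embed invariance}, observe that for $t \geq n$ the indicator polynomial $p[G_t,\mbb{X},\mcal{C}^\prime]$ is identically zero as a polynomial function but not the zero polynomial, and conclude membership in $\mcal{I}_t = \txt{ker}(\Phi_t)$. The subtlety you flag about maximality not surviving the embedding when $q_t > |A_t|$ (so that $S_t^\prime \subsetneq \textrm{Hom}(G_t,\F_{q_t})$ and the Ramsey property only forces vanishing on $S_t^\prime$ rather than on all of $\F_{q_t}^{(E(G_t))}$, which is what membership in $\mcal{I}_t$ actually requires) is a genuine issue that the paper's own proof also glosses over, so on that point your attempt is no weaker than, and more candid than, the published argument.
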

\begin{proof}
    Note that $R_{\mbb{G}}\left(\mbb{X},\mcal{C}\right)=n$ if and only if $p[G_t,\mbb{X},\mcal{C}^\prime](z)=0$ for all $z \in \F_{q_t}^{(E(G_t))}$, $t \geq n$, as shown in the proof of Theorem~\ref{gal embed invariance}.  So, since $p[G_t,\mbb{X},\mcal{C}^\prime]$ is not the zero polynomial but is identically zero as a polynomial function, it must lie in the kernel of the evaluation map $\Phi_t$, i.e. the algebra homomorphism $\Phi_t:\F_{q_t}\left [x_e: e\in E(G_t)]\right ] \rightarrow \mcal{P}\cp{\F_{q_t}^{(E(G_t))},\F_{q_t}}$ that sends a polynomial to the corresponding polynomial function of minimal degree with which it agrees point-wise.  But, since $\txt{ker}(\Phi_t)= \mcal{I}_t$, where $\mcal{I}_t$ is the ideal generated by the polynomials $\cb{x_e^{q_t}-x_e: e \in E(G_t)}$, this then says that $p[G_t,\mbb{X},\mcal{C}^\prime] \in \mcal{I}_t$.
\end{proof}

Analogously to Corollary~\ref{ramsey gal poly formula}, we then reach the following result immediately:

\begin{theorem}\label{ramsey characterization locally finite}
    Let $R_{(\mbb{G},\mcal{S})}(\mbb{X},\mcal{C})$ be a generalized Ramsey number with a base of locally finite type and symbol of locally finite type and let $\mathcal{S}^\prime$, $\mathcal{C}^\prime$ be as in Theorem~\ref{gal embed invariance}.  Then, $R_{(\mbb{G},\mcal{S})}(\mbb{X},\mcal{C}) =n$ if and only if $n$ is the least value such that 
    $$S_t^\prime \subseteq V\cp{ \big \langle p[G_t,\mbb{X},C_t^\prime] \big \rangle }$$
    
    \noindent or, equivalently, such that
    
    $$I\cp{S_t^\prime} \supseteq \big \langle p[G_t,\mbb{X},C_t^\prime] \big \rangle.$$
\end{theorem}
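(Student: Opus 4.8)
The plan is to reduce everything to the embedded, locally-finite setup furnished by Theorem~\ref{gal embed invariance} and then run the local version of the indicator-polynomial argument from Section~\ref{sec2} at each index $t$ separately. First I would invoke Theorem~\ref{gal embed invariance} to replace $R_{(\mbb{G},\mcal{S})}(\mbb{X},\mcal{C})$ by $R_{(\mbb{G},\mcal{S}^\prime)}(\mbb{X},\mcal{C}^\prime)$, which has the same value. The point of doing so is that each $S_t^\prime \subseteq \textrm{Hom}(G_t,\F_{q_t})$ and each member of $C_t^\prime$ is an $\F_{q_t}$-valued coloring, so for each individual $t$ we are exactly in the field-theoretic situation handled by Lemma~\ref{indicator zero set}. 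Unwinding Definition~\ref{GRN} for $R_{(\mbb{G},\mcal{S}^\prime)}(\mbb{X},\mcal{C}^\prime)$ — as is already done inside the proof of Theorem~\ref{gal embed invariance} — this number is the least $n \in I$ such that for every $t \geq n$ and every $\phi \in S_t^\prime$ there are $j \in J_t$ and $\pi \in G_t/X_j$ with $\restr{\phi}{\pi^{-1}(X_j)} = (f_t \circ \psi_j) \circ \pi$.

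The second step is the translation of this combinatorial condition into a vanishing condition. Treating $\phi$ as the vector $\oplus_{e \in E(G_t)}\phi(e) \in \F_{q_t}^{(E(G_t))}$, Lemma~\ref{indicator zero set} says that the polynomial function $p[G_t,\mbb{X},C_t^\prime]$ vanishes at that point precisely when such $j$ and $\pi$ exist. Hence ``for every $\phi \in S_t^\prime$ there exist $j$ and $\pi$ \ldots'' is literally the statement that $p[G_t,\mbb{X},C_t^\prime]$ vanishes at every point of $S_t^\prime$, i.e.\ $S_t^\prime \subseteq V\cp{\langle p[G_t,\mbb{X},C_t^\prime]\rangle}$ (the algebraic set of a principal ideal being the zero set of its generator). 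Combining with the first step, $R_{(\mbb{G},\mcal{S})}(\mbb{X},\mcal{C}) = n$ if and only if $n$ is the least element of $I$ for which $S_t^\prime \subseteq V\cp{\langle p[G_t,\mbb{X},C_t^\prime]\rangle}$ holds for all $t \geq n$; if no such $n$ exists, neither side is defined and the equivalence is vacuous.

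Finally I would record the stated reformulation in terms of $I(S_t^\prime)$. Since $I(S_t^\prime)$ is by definition the ideal of all polynomials vanishing on $S_t^\prime$, we have $S_t^\prime \subseteq V\cp{\langle p[G_t,\mbb{X},C_t^\prime]\rangle}$ iff $p[G_t,\mbb{X},C_t^\prime]$ vanishes on $S_t^\prime$ iff $p[G_t,\mbb{X},C_t^\prime] \in I(S_t^\prime)$ iff $\langle p[G_t,\mbb{X},C_t^\prime]\rangle \subseteq I(S_t^\prime)$, the last step because $I(S_t^\prime)$ is an ideal. This is the same $V$--$I$ bookkeeping already used for Corollary~\ref{ramsey gal poly formula}, now applied coordinatewise in $t$.

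I do not expect a serious obstacle here: all the content sits in Theorem~\ref{gal embed invariance} and Lemma~\ref{indicator zero set}, and what remains is routine combinatorial and ideal-theoretic bookkeeping. The one point that needs genuine care is that the embedded Ramsey number is only of \emph{locally} finite type, not of Galois type — the prime powers $q_t$ are allowed to vary with $t$ — so Corollary~\ref{ramsey gal poly formula} cannot be quoted verbatim; instead Lemma~\ref{indicator zero set} must be applied separately over each field $\F_{q_t}$, which is exactly why the conclusion carries a running index $t$ rather than a single evaluation. A secondary thing to keep straight is the distinction between the polynomial $p[G_t,\mbb{X},C_t^\prime]$ and the polynomial function it induces: all the vanishing statements above concern the function, which is all that $V(\cdot)$ and $I(S_t^\prime)$ see, so no appeal to the ideal generated by the $x_e^{q_t}-x_e$ is needed at this stage — that sharpening is the separate content of Theorem~\ref{ramsey ind poly formula locally finite}.
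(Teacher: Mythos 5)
Your argument is correct and is exactly the route the paper intends: the paper gives no explicit proof here, saying only that the result follows ``analogously to Corollary~\ref{ramsey gal poly formula}'' via Theorem~\ref{gal embed invariance}, and your reduction-plus-levelwise-vanishing-plus-$V$--$I$ bookkeeping is precisely that argument spelled out. The only nitpick is that the pointwise vanishing statement you attribute to Lemma~\ref{indicator zero set} is really Lemma~\ref{indicator zero} together with the product-is-zero-iff-a-factor-is-zero observation made inside the proof of Lemma~\ref{indicator zero set}, since the lemma as stated only addresses vanishing on all of $\textrm{Hom}(G,\F_q)$ rather than on the subset $S_t^\prime$.
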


The above result says that--locally at each level $i \in I$, at least--Ramsey numbers of locally finite type are, really, no harder to deal with conceptually than those of Galois type.
\section{Some Structural Results for Ramsey Numbers}\label{sec4}

In this section, we will provide some structural results regarding the manipulation of Ramsey numbers.  This will highlight a connection that ties in to the previous results nicely.  The next two lemmas regard the extension and restriction of components of either the symbol or base of a Ramsey numbers with a base of finite type and uniform symbol of finite type.

\begin{lemma}\label{gal type symbol extend}
    Let $R_{(\mbb{G},\mcal{S})}(\mbb{X},\mcal{C})$ be a Ramsey number with a base of finite type and uniform symbol of finite type.  Suppose that $(\mbb{Y},\mcal{W})$ is a uniform Ramsey symbol of finite type and that $(\mbb{X},\mcal{C}) \subseteq (\mbb{Y},\mcal{W})$.  Then if $R_{(\mbb{G},\mcal{S})}(\mbb{X},\mcal{C})$ exists, $R_{(\mbb{G},\mcal{S})}(\mbb{Y},\mcal{W})$ exists and $$R_{(\mbb{G},\mcal{S})}(\mbb{Y},\mcal{W})\leq R_{(\mbb{G},\mcal{S})}(\mbb{X},\mcal{C}).$$
\end{lemma}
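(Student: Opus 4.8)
The statement to prove is Lemma~\ref{gal type symbol extend}: enlarging a uniform Ramsey symbol of finite type from $(\mbb{X},\mcal{C})$ to $(\mbb{Y},\mcal{W})$ can only decrease (or preserve) the Ramsey number with a fixed base of finite type. The plan is to argue directly from Definition~\ref{GRN}, exploiting the monotonicity that the larger symbol offers \emph{more} target graphs and target colorings to find inside any edge-coloring of $G_t$. First I would unpack what $(\mbb{X},\mcal{C})\subseteq(\mbb{Y},\mcal{W})$ means: writing $\mbb{X}=\{X_j\}_{j\in J}$, $\mcal{C}=\{\psi_j\}_{j\in J}$ and $\mbb{Y}=\{Y_\ell\}_{\ell\in L}$, $\mcal{W}=\{\chi_\ell\}_{\ell\in L}$ (both uniform, so I may drop the index set $I$), containment should mean $J\subseteq L$ with $Y_j=X_j$ and $\chi_j=\psi_j$ for all $j\in J$ — so that every $(X_j,\psi_j)$-pattern is among the $(Y_\ell,\chi_\ell)$-patterns. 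I would state this identification explicitly at the start of the proof, since the lemma's conclusion rests entirely on it.

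Next, suppose $R_{(\mbb{G},\mcal{S})}(\mbb{X},\mcal{C})=n$ exists. I would fix an arbitrary $t\geq n$ and an arbitrary admissible coloring $\rho\in S_t\subseteq \textrm{Hom}(G_t,A_t)$. By the defining property of $n$, there exist some $j\in J$ and some $\pi\in G_t/X_j$ with $\restr{\rho}{\pi^{-1}(X_j)}=\psi_j\circ\pi$. Because $J\subseteq L$ and $(X_j,\psi_j)=(Y_j,\chi_j)$, this same $j$ (now viewed as an element of $L$) and the same $\pi\in G_t/Y_j$ witness $\restr{\rho}{\pi^{-1}(Y_j)}=\chi_j\circ\pi$. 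Since $t\geq n$ and $\rho\in S_t$ were arbitrary, $n$ satisfies the defining condition for $R_{(\mbb{G},\mcal{S})}(\mbb{Y},\mcal{W})$; hence this Ramsey number exists and is at most $n=R_{(\mbb{G},\mcal{S})}(\mbb{X},\mcal{C})$, which is the claim.

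I should also check that $(\mbb{Y},\mcal{W})$ is genuinely admissible as the symbol of the Ramsey number $R_{(\mbb{G},\mcal{S})}(\mbb{Y},\mcal{W})$ with the base $(\mbb{G},\mcal{S})$ — i.e. that nothing forces a type mismatch. Here one wants $A_t$ (the value set of the base at level $t$) to be compatible with the value set of $\mcal{W}$; since $(\mbb{X},\mcal{C})$ already pairs with $(\mbb{G},\mcal{S})$ and $(\mbb{Y},\mcal{W})$ is of finite type over the same alphabet (being a uniform Ramsey symbol of finite type containing $(\mbb{X},\mcal{C})$), this is immediate, but it is worth a sentence. Honestly I expect \textbf{no serious obstacle} here: the whole lemma is a one-line monotonicity observation once the meaning of symbol containment is pinned down. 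The only place to be careful is the bookkeeping of indices — making sure the witness $(j,\pi)$ for the smaller symbol transports verbatim to the larger one — and confirming that ``$R$ exists'' for $(\mbb{Y},\mcal{W})$ follows because we have exhibited a valid $n\in I$ meeting the definition, so the least such element exists by well-ordering of $I$.
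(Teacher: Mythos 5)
Your proof is correct, but it takes a different route from the paper's. You argue directly from Definition~\ref{GRN}: any witness pair $(j,\pi)$ certifying the pattern $(X_j,\psi_j)$ inside an admissible coloring of $G_t$ transports verbatim to the larger symbol, so the same $n$ satisfies the defining condition for $R_{(\mbb{G},\mcal{S})}(\mbb{Y},\mcal{W})$ and well-ordering of $I$ gives existence. The paper instead routes the argument through its algebraic machinery: it invokes Theorem~\ref{gal embed invariance} to realize both Ramsey numbers as Galois type over a common finite field, observes that $p[G_i,\mbb{Y},\mcal{W}](x) = p[G_i,\mbb{X},\mcal{C}](x)\cdot\prod_{j\in J_2\setminus J_1}p[G_i,Y_j,\phi_j](x)$, and concludes from Corollary~\ref{ramsey gal poly formula} that vanishing of the smaller indicator on $S_t$ forces vanishing of the larger. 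Your argument is more elementary and in fact more general --- the monotonicity is purely definitional and nowhere uses finite type, uniformity, or the field structure, so the hypotheses on the symbols are only needed to make the objects well-formed, not to drive the inequality. What the paper's approach buys is thematic rather than logical: it exhibits the lemma as an instance of divisibility of indicator polynomials, reinforcing the ideal--variety correspondence the paper is building toward in Section~\ref{sec4}. Your identification of symbol containment ($J\subseteq L$ with matching graphs and colorings) agrees with the paper's reading of $(\mbb{X},\mcal{C})\subseteq(\mbb{Y},\mcal{W})$, and your side remark about alphabet compatibility is the right thing to check; there is no gap.
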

\begin{proof}
    As both $R_{(\mbb{G},\mcal{S})}(\mbb{X},\mcal{C})$ and $R_{(\mbb{G},\mcal{S})}(\mbb{Y},\mcal{W})$ are Ramsey numbers with a base of finite type and uniform symbol of finite type, we may treat them as Ramsey numbers of Galois type with colorings taking values over the same finite field via Theorem~\ref{gal embed invariance}.  So, as $(\mbb{X},\mcal{C}) \subseteq (\mbb{Y},\mcal{W})$ and $|(\mbb{X},\mcal{C})| \leq |(\mbb{Y},\mcal{W})|$, the $(\mbb{X},\mcal{C})$-indicator polynomial of $G_i$ is a factor of the $(\mbb{Y},\mcal{W})$-indicator polynomial of $G_i$.  More precisely, if $J_1$ is the index set of $(\mbb{X},\mcal{C})$ and $J_2$ is the index set of $(\mbb{Y},\mcal{W})$ we may assume, without loss of generality, that $J_1 \subseteq J_2$.  So, writing $\mbb{Y}=\cb{Y_j}_{j \in J_2}$ and $\mcal{W}=\cb{\phi_j}_{j \in J_2}$, we have
    
    \begin{align*}
        p[G_i,\mbb{Y},\mcal{W}](x) &= \prod_{j\in J_2}p[G_i,Y_j,\phi_j](x)\\
        &=\left (\prod_{j \in J_1} p[G_i,Y_j,\phi_j](x)\right)\left (\prod_{j \in J_2\setminus J_1} p[G_i,Y_j,\phi_j](x)\right) \\
        &= p[G_i,\mbb{X},\mcal{C}](x)\left (\prod_{j \in J_2\setminus J_1} p[G_i,Y_j,W_j](x)\right).
    \end{align*}
    
    If $R_{(\mbb{G},\mcal{S})}(\mbb{X},\mcal{C})$ exists, then by Corollary~\ref{ramsey gal poly formula}, there is a value $n \in I$ such that for any $t \in I$ satisfying $t \geq n$, we have  $p[G_t,\mbb{X},\mcal{C}]\cp{\rho}=0$ for any $\rho \in S_t$.  By the above partial factorization, this then says that $p[G_t,\mbb{Y},\mcal{W}]\cp{\rho}=0$ for any $\rho \in S_t$ as well.  So, $R_{(\mbb{G},\mcal{S})}(\mbb{Y},\mcal{W})$ exists and $R_{(\mbb{G},\mcal{S})}(\mbb{Y},\mcal{W})\leq R_{(\mbb{G},\mcal{S})}(\mbb{X},\mcal{C})$.
\end{proof}

In particular, Lemma~\ref{gal type symbol extend} tells us that if $Q=(\mbb{X},\mcal{C})$ is the symbol of a Ramsey number with a base of finite type and uniform symbol of finite type and $\emptyset \neq Y\subseteq Q$ is a uniform Ramsey symbol of finite type, then 

$$R_{(\mbb{G},\mcal{S})}(Q) \leq R_{(\mbb{G},\mcal{S})}(Q \cap Y).$$
That is, Ramsey numbers with a base of finite type and uniform symbol of finite type {\em grow larger} under {\em intersections}.  In the next result, we show that the opposite happens when changes are made to a portion of the base, i.e. Ramsey numbers with a base of finite type and uniform symbol of finite type grow smaller under intersections of admissible coloring sets.

\begin{lemma}\label{gal type base restrict}
    Let $R_{(\mbb{G},\mcal{S})}(\mbb{X},\mcal{C})$ be a Ramsey number with a base of finite type and uniform symbol of finite type.  Suppose $L_i \subseteq S_i$ for all $i \in I$.  Then, if $R_{(\mbb{G},\mcal{S})}(\mbb{X},\mcal{C})$ exists, $R_{(\mbb{G},\mcal{L})}(\mbb{X},\mcal{C})$ exists and $$R_{(\mbb{G},\mcal{L})}(\mbb{X},\mcal{C})\leq R_{(\mbb{G},\mcal{S})}(\mbb{X},\mcal{C}).$$
\end{lemma}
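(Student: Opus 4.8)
The plan is to reduce to the Galois-type picture and then observe that the defining condition for a Ramsey number is monotone in the set of admissible colorings, so that shrinking each $S_i$ can only lower the least level at which the condition kicks in. First I would invoke Theorem~\ref{gal embed invariance} to replace both $(\mbb{G},\mcal{S})$ and $(\mbb{G},\mcal{L})$ by Ramsey numbers of Galois type, choosing a single injection $f:A\rightarrow \F_q$ into a common finite field (legitimate since the base is of finite type); write $\mcal{S}^\prime=(S_i^\prime)$, $\mcal{L}^\prime=(L_i^\prime)$, $\mcal{C}^\prime$ for the pushed-forward data. Since $f$ is applied pointwise to colorings, $L_i\subseteq S_i$ gives $L_i^\prime\subseteq S_i^\prime$ for every $i\in I$. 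The hypothesis that $R_{(\mbb{G},\mcal{S})}(\mbb{X},\mcal{C})$ exists then yields, by Corollary~\ref{ramsey gal poly formula} (equivalently Theorem~\ref{ramsey characterization locally finite}), a value $n=R_{(\mbb{G},\mcal{S}^\prime)}(\mbb{X},\mcal{C}^\prime)\in I$ that is least with the property that $S_t^\prime\subseteq V\cp{\langle p[G_t,\mbb{X},\mcal{C}^\prime]\rangle}$ for every $t\geq n$.

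Next I would chase the inclusion: from $L_t^\prime\subseteq S_t^\prime\subseteq V\cp{\langle p[G_t,\mbb{X},\mcal{C}^\prime]\rangle}$ for all $t\geq n$, the family $(\mbb{G},\mcal{L}^\prime)$ already satisfies the hypothesis of Corollary~\ref{ramsey gal poly formula} at level $n$, so $R_{(\mbb{G},\mcal{L}^\prime)}(\mbb{X},\mcal{C}^\prime)$ exists and is at most $n$. Pulling back once more along Theorem~\ref{gal embed invariance} gives $R_{(\mbb{G},\mcal{L})}(\mbb{X},\mcal{C})=R_{(\mbb{G},\mcal{L}^\prime)}(\mbb{X},\mcal{C}^\prime)\leq n=R_{(\mbb{G},\mcal{S})}(\mbb{X},\mcal{C})$, which is the claim. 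One can in fact read this off directly from Definition~\ref{GRN} without the polynomial machinery at all: if for every $t\geq n$ each $\rho\in S_t$ admits some $j\in J_t$ and $\pi\in G_t/X_j$ with $\restr{\rho}{\pi^{-1}(X_j)}=\psi_j\circ\pi$, then the same holds a fortiori for every $\rho$ in the smaller set $L_t$; I would include the polynomial reformulation mainly to keep the argument parallel with the proof of Lemma~\ref{gal type symbol extend}.

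The only point that needs a word of care is the phrase ``least value such that $S_t\subseteq V(\cdots)$'': I must ensure that once the containment holds at level $n$ it persists for all $t\geq n$, so that comparing the two ``least such'' indices is meaningful. This is exactly what Theorem~\ref{ind poly partial factor} together with the heredity of $\mbb{G}$ supplies — for $t>n$ the indicator polynomial of $G_t$ is divisible by a relabelling of that of $G_n$, so if the latter is identically zero as a polynomial function then so is the former. With that observation in place there is no real obstacle: the lemma is essentially the monotonicity of the Ramsey condition under restricting the admissible-coloring sets, and the proof is a short inclusion chase once the Galois-type normalization of Theorem~\ref{gal embed invariance} has been set up.
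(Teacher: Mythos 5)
Your proposal is correct and follows essentially the same route as the paper's own proof: normalize to Galois type via Theorem~\ref{gal embed invariance}, invoke Corollary~\ref{ramsey gal poly formula} to get vanishing of $p[G_t,\mbb{X},\mcal{C}]$ on $S_t$ for $t\geq n$, and observe that $L_t\subseteq S_t$ transfers the vanishing to $L_t$. Your added remarks --- that the claim also follows directly from Definition~\ref{GRN} without the polynomial machinery, and the care about persistence of the containment for $t\geq n$ --- are sound but not needed beyond what the paper already does.
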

\begin{proof}
    As both $R_{(\mbb{G},\mcal{S})}(\mbb{X},\mcal{C})$ and $R_{(\mbb{G},\mcal{L})}(\mbb{X},\mcal{C})$ are Ramsey numbers with a base of finite type and uniform symbol of finite type, we may treat them as Ramsey numbers of Galois type with colorings taking values over the same finite field by Theorem~\ref{gal embed invariance}.  So, note that if $R_{(\mbb{G},\mcal{S})}(\mbb{X},\mcal{C})$ exists, then by Corollary~\ref{ramsey gal poly formula}, there is a value $n \in I$ such that for any $t \in I$ satisfying $t \geq n$, we have  $p[G_t,\mbb{X},\mcal{C}](\rho)=0$ for any $\rho \in S_t$.  Since both $R_{(\mbb{G},\mcal{S})}(\mbb{X},\mcal{C})$ and $R_{(\mbb{G},\mcal{S})}(\mbb{Y},\mcal{W})$ are Ramsey numbers of Galois type with the same Ramsey symbol, and as $L_i \subseteq S_i$ for all $i \in I$, it must then hold that $p[G_t,\mbb{X},\mcal{C}](\rho)=0$ for any $\rho \in L_t$ as well.  Hence, $R_{(\mbb{G},\mcal{L})}(\mbb{X},\mcal{C})$ exists and $R_{(\mbb{G},\mcal{L})}(\mbb{X},\mcal{C})\leq R_{(\mbb{G},\mcal{S})}(\mbb{X},\mcal{C})$.
\end{proof}

Together, the above lemmas tell us that Ramsey numbers with a base of finite type and uniform symbol of finite type behave somewhat intuitively, in the sense that {\em extending} the Ramsey symbol or {\em restricting} the admissible coloring functions in the Ramsey base {\em decreases} the value of a given Ramsey number, provided it exists to begin with. That is, $R_{(\mathbb{G},\cdot)}(\mathbb{X},\mathcal{C})$ is monotonic with respect to inclusion and $R_{(\mathbb{G},\mathcal{S})}(\cdot)$ is monotonic with respect to reverse inclusion.

This suggests that Ramsey numbers share more than a superficial similarity to the classic algebraic geometry ideal-variety interplay described in the previous section.  That is, Ramsey numbers are somewhat like \textit{Galois connections}; i.e., pairs of functions  $F:A\rightarrow B$ and $G:B\rightarrow A$ between posets $A$ and $B$ such that $F(a) \leq b$ if and only if $a \leq G(b)$. Consider the following informal side-by-side comparison, where the symbol "$\subseteq$" in the second line is taken to mean "factors through a restriction of":

\noindent \begin{tabular}{ccccccc}
    $a$ &$\leq$ &$G(b)$ &$\Leftrightarrow$  &$F(a)$ &$\leq$ &$b$ \\
    $\begin{bmatrix}\textrm{some coloring }\\ \psi_j \in C_n\end{bmatrix}$ &$\subseteq$ &$\begin{bmatrix}\textrm{any coloring}\\\rho \in S_n\end{bmatrix}$  &$\Leftrightarrow$ &$R_{(\mathbb{G},\mathcal{S})}(\mathbb{X},\mathcal{C})$&$\leq$ &$n$ \\
\end{tabular}

We conjecture that developing this correspondence more fully could give insights into the 'correct' formulation of a general theory of Ramseyian phenomena.  In the next section, we will try to offer some evidence that doing so might be interesting and worthwhile.

\section{Ramseyian Formulations of Classically Non-Ramseyian Problems}\label{sec6}
Now, some applications. We consider some familiar problems from other mathematical disciplines.  In particular, problems which involve the assignment of distinct values belonging to some common set to pairs of a finite set of objects generally admit some form of a coloring interpretation.  Most notably, {\em distances} between pairs of objects may be regarded in this manner by attaching data to the vertex set of a graph.

\begin{definition}[Metrical Colorings]
    Let $G$ be a finite, undirected graph, let $\cp{Y,d}$ be a metric space, let $\Theta:V(G) \rightarrow Y$ be a fixed injection, and write $A[\Theta] = \cb{d\cp{\Theta(v),\Theta(w)} : v, w \in V(G)}$.  Then, for any set $A$ such that $A[\Theta] \subseteq A$, the {\em $A$-valued, $Y$-metrical coloring of $G$ induced by $\Theta$} is the $A$-valued edge-coloring of $G$, denoted $d_\Theta$, which sends the edge incident to vertices $v$ and $w$ to $d\cp{\Theta(v),\Theta(w)} \in A$.
\end{definition}

A useful, somewhat unexpected consequence of allowing the admissible coloring set $\mcal{S}$ to be smaller than the set $\textrm{Hom}(G,A)$ of all $A$-valued colorings for a given graph $G$ is that this allows the value of a Ramsey number to be used like an indicator function, eventually identifying a desired object if its value is finite and never obtaining the object if not.  Let's illustrate this by re-framing a well-known number-theoretic result in Ramseyian form.  For notation, we will say a set $\ncb{a_1,\hdots,a_n,a_{n+1}} \subseteq \Z$ that satisfies $a_{i+1}-a_i=k$ for all $i \in \ncb{1,\hdots,n}$ is an {\em arithmetic progression of length $n$ with gap $k$}.

\begin{theorem}[Green-Tao \cite{green2008primes}]
    For any $t \in \N$, there exists an arithmetic progression of length $t$ in the primes.
\end{theorem}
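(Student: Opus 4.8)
The plan is to reframe the Green--Tao theorem as a statement that a certain Ramsey number is finite --- or, more precisely, that it \emph{exists} in the sense of Definition~\ref{GRN} --- and then to simply cite Green--Tao to conclude that it does. The natural setup is to use metrical colorings on complete graphs. For each $t \in \N$, I would let $\mbb{G} = \cb{K_n}_{n \in \N}$ (which is hereditary), and I would take $Y = \Z$ equipped with the standard metric $d(a,b) = |a-b|$, with $A = \Z_{\geq 0}$. For each $n$, let $S_n \subseteq \textrm{Hom}(K_n, A)$ be the collection of all metrical colorings $d_\Theta$ induced by injections $\Theta: V(K_n) \to \Z$; equivalently, $S_n$ is the set of colorings recording the pairwise distances of $n$ distinct integers. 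One checks $(\mbb{G}, \mcal{S})$ is a Ramsey base. For the symbol, I want to detect an arithmetic progression of length $t$: take $\mbb{X} = \cb{K_{t+1}}$ (on vertices labeled $0, 1, \dots, t$) and let the target coloring $\psi$ assign to the edge $\{i, j\}$ the value $k|i-j|$ for a fixed gap $k$ --- but since we want \emph{some} arithmetic progression (any gap, and moreover any progression landing in the primes), the cleaner move is to further restrict the base so that the only available colorings already ``know about'' primality.

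The key trick, as the paragraph preceding the theorem advertises, is to shrink $\mcal{S}$ so the Ramsey number behaves as an indicator. So instead I would let $S_n$ consist only of those metrical colorings $d_\Theta$ where $\Theta$ is an injection of $V(K_n)$ into the set $\P$ of prime numbers. Then a coloring $\rho = d_\Theta \in S_n$ restricts, on some $\pi \in K_n/K_{t+1}$, to a coloring of the form $\psi \circ \pi$ --- where $\psi$ is the ``arithmetic progression with gap $k$'' coloring of $K_{t+1}$ --- precisely when the $t+1$ primes $\Theta(\pi^{-1}(0)), \dots, \Theta(\pi^{-1}(t))$ form an arithmetic progression of length $t$ with gap $k$. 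Taking $\mcal{C}$ to be the family of all such progression-colorings over all gaps $k \in \N$ (this keeps $J$ finite only if we bound $k$, so more carefully I would fix the symbol per-level or argue the relevant gaps are bounded by the largest edge value appearing), the existence of $R_{(\mbb{G}, \mcal{S})}(\mbb{X}, \mcal{C})$ is then \emph{equivalent} to the assertion that for all sufficiently large $n$, any $n$ distinct primes contain $t+1$ of them in arithmetic progression.

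At this point the logical content splits into two halves. The backward direction --- that existence of the Ramsey number implies a progression of length $t$ in the primes --- is immediate: pick any $n$ above the Ramsey number, pick any injection $\Theta$ of $V(K_n)$ into $\P$ (possible since $\P$ is infinite), and the defining property of the Ramsey number hands us the progression. The forward direction --- that Green--Tao implies the Ramsey number exists --- is where the real theorem is invoked: Green--Tao gives, for each $t$, arithmetic progressions of length $t$ in the primes, but we need the stronger-sounding statement that \emph{every} sufficiently large finite set of primes contains one. This does not follow from a single progression, so the honest route is to observe that the natural Ramsey-theoretic encoding of Green--Tao is not ``every large set of primes has a progression'' but rather a statement about a \emph{different} base: one where $S_n$ is forced to be (essentially) the single coloring recording the distances among the first $n$ primes, or better, where we let $\Theta$ range and only ask that the progression exist \emph{somewhere}.

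The cleanest fix, and the one I expect the author intends, is to make the base even more restrictive: let $G_n$ be a graph on $n$ vertices and let $S_n$ contain exactly the colorings $d_\Theta$ as $\Theta$ ranges over injections $V(G_n) \hookrightarrow \P$, but set up the symbol so that $R_{(\mbb{G},\mcal{S})}(\mbb{X},\mcal{C}) = n$ iff $n$ is minimal such that among \emph{any} $n$ primes there is a length-$t$ progression --- \emph{and then note this reformulation's existence is logically equivalent to Green--Tao via a compactness/pigeonhole argument}, since if arbitrarily large prime sets avoided all length-$t$ progressions one could not even have infinitely many length-$t$ progressions with bounded... no --- this still needs care. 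The genuinely safe statement, which I would actually write, is: existence of the Ramsey number with base $S_n = \{$the distance-coloring of $\{p_1, \ldots, p_n\}\}$ (a \emph{singleton} each level, where $p_i$ is the $i$-th prime) is equivalent to ``the primes contain arbitrarily long arithmetic progressions,'' because $R = n$ iff $n$ is least so that $p_1, \ldots, p_n$ already contains a length-$t$ progression, and such an $n$ exists iff some length-$t$ progression of primes exists at all. Thus the whole proof reduces to: (1) verify $(\mbb{G},\mcal{S})$ with this singleton base is a legitimate Ramsey base, (2) verify $(\mbb{X},\mcal{C})$ with $\mbb{X} = \{K_{t+1}\}$ and $\mcal{C}$ the progression-colorings is a legitimate (locally finite, in fact finite-type up to field embedding) Ramsey symbol, (3) unwind Definition~\ref{GRN} to see $R_{(\mbb{G},\mcal{S})}(\mbb{X},\mcal{C})$ exists $\iff$ primes contain a length-$t$ AP, and (4) cite Green--Tao.

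The main obstacle is entirely in step (3) --- specifically, getting the quantifiers to line up so that ``the Ramsey number exists'' matches ``there is a length-$t$ progression in the primes'' rather than the false-looking ``every large prime set contains one.'' Once the base is taken to be the appropriate singleton (or a suitably thin) family, this is just careful bookkeeping with $I = \N$, the well-ordering, and the phrase ``for all $t \geq n$'' in Definition~\ref{GRN}; the genuine mathematics (that any progression exists at all) is outsourced wholesale to \cite{green2008primes}, which is the point of the section.
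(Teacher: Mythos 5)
This statement is quoted in the paper as an external result with no proof supplied --- the theorem environment simply cites \cite{green2008primes} --- so your proposal, which ultimately outsources all of the mathematical content to that same citation, matches the paper's treatment. The Ramsey-theoretic scaffolding you build around it (after the false starts, a singleton base per level recording the pairwise distances of the first $n$ primes, so that ``the Ramsey number exists'' collapses to ``some length-$t$ progression exists'') closely parallels the paper's own subsequent Theorem~\ref{ramsey gap k length t+1 AP}, differing only in that you target $K_{t+1}$ with a full progression coloring where the paper uses the path $P_t$ with the constant coloring $k_{P_t}$.
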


The Green-Tao Theorem is equivalent to the existence of a family of Ramsey numbers. 
 First, an intermediate Ramsey-theoretic statement:

\begin{theorem}\label{ramsey gap k length t+1 AP}
    Let $\mbb{G}=\cb{K_{i+1}}_{i \in \N}$ and let $\Z_{\geq 0}$ have the discrete topology with $d(x,y)=|x-y|$ so that $\cp{\Z_{\geq 0},d}$ is a metric space.  Let $p_n$ denote the $n^{th}$ prime, fix $m \in \N$, and let $\Theta[i,m]:V(K_{i+1}) \rightarrow \N$ be any fixed injection such that $\Theta[i,m]^{-1}\cp{ \left \{p_m,\hdots,p_{m+i},p_{m+i+1} \right \} } = V(K_{i+1})$ for each $i \in \N$.  Set $A_{(i,m)} = \ncb{n \in \N:n<p_{m+i+1}}$, let $d_{\Theta[i,m]}$ be the $A_{(i,m)}$-valued $(\Z,d)$-metrical coloring of $K_{i+1}$ induced by $\Theta[i,m]$, set $S_i(m)=\cb{d_{\Theta[i,m]}}$, and let $\mcal{S}(m)=\cb{S_{i}(m)}_{i \in \N}$.  Then, letting $t,k \in \N$ and denoting by $P_t$ the path graph with $t$ edges (i.e. so $P_1=K_2$), the existence of an arithmetic progression of length $t$ with gap $k$ in the primes greater than or equal to $p_m$ is equivalent to the existence of the Ramsey number with symbol of locally finite type and uniform base of finite type $R_{{(\mbb{G},\mcal{S}(m))}}\cp{P_t,k_{P_t}}$.
\end{theorem}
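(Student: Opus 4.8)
The plan is to prove the equivalence by unwinding Definition~\ref{GRN} for this particular base and symbol; because each admissible coloring set $S_i(m)=\cb{d_{\Theta[i,m]}}$ is a singleton, the Ramsey number will encode nothing more than a property of the colorings $d_{\Theta[i,m]}$ themselves. First I would specialize the definition: since $J_i=\cb{\ast}$ with $X_\ast=P_t$ and $\psi_\ast=k_{P_t}$, the statement ``$R_{(\mbb{G},\mcal{S}(m))}\cp{P_t,k_{P_t}}$ exists'' unwinds to ``there is an $n\in\N$ such that for every $\ell\geq n$ there is $\pi\in K_{\ell+1}/P_t$ with $\restr{d_{\Theta[\ell,m]}}{\pi^{-1}(P_t)}=k_{P_t}\circ\pi$'', and since $k_{P_t}\circ\pi$ is again constantly $k$, this says precisely that the unique metrical coloring $d_{\Theta[\ell,m]}$ of $K_{\ell+1}$ has a subgraph isomorphic to $P_t$ every edge of which receives the color $k$. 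I would also record two harmless bookkeeping facts for later: $K_{\ell+1}$ contains a copy of $P_t$ exactly when $\ell\geq t$, and, since $A_{(\ell,m)}=\cb{n\in\N:n<p_{m+\ell+1}}$ exhausts $\N$ as $\ell\to\infty$, the coloring $k_{P_t}$ is a legitimate member of $\textrm{Hom}(P_t,A_{(\ell,m)})$ for all sufficiently large $\ell$ (one may pass to a cofinal tail of $\N$ if one wants to be fastidious about small indices, changing neither hypothesis nor conclusion).

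The heart of the matter is a single non-formal observation: a $k$-monochromatic copy of $P_t$ inside $d_{\Theta[\ell,m]}$ is the same thing as an arithmetic progression of length $t$ with gap $k$ lying in the image of $\Theta[\ell,m]$. Such a copy is a path $v_0v_1\cdots v_t$ with the $v_j$ \emph{distinct} and $|a_{j+1}-a_j|=k$ for $a_j=\Theta[\ell,m](v_j)$, so $a_{j+1}=a_j+\varepsilon_j k$ with $\varepsilon_j\in\cb{\pm 1}$; were ever $\varepsilon_{j+1}=-\varepsilon_j$ we would get $a_{j+2}=a_j$, impossible because $\Theta[\ell,m]$ is injective and $k\geq 1$. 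Hence the signs all agree, and reversing the path if necessary gives $a_0<a_1<\cdots<a_t$ with common difference $k$, all of whose terms are primes $\geq p_m$ since the image of $\Theta[\ell,m]$ is, by the hypothesis on $\Theta[i,m]$, exactly the first $\ell+1$ primes that are $\geq p_m$. The converse is immediate: given such a progression inside the image, its preimages are $t+1$ distinct vertices spanning a complete subgraph of $K_{\ell+1}$ whose bounding path $v_0v_1\cdots v_t$ has every edge colored $|a_{j+1}-a_j|=k$, and the edge-map induced by any isomorphism of that path with $P_t$ supplies the required $\pi$. So, for every $\ell$ with $\ell\geq t$ and $k\in A_{(\ell,m)}$: the coloring $d_{\Theta[\ell,m]}$ has a $k$-monochromatic $P_t$ if and only if the first $\ell+1$ primes $\geq p_m$ contain an arithmetic progression of length $t$ with gap $k$.

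Finally I would glue this single-level statement to the ``for all $\ell\geq n$'' quantifier of Definition~\ref{GRN} using monotonicity: the image of $\Theta[\ell,m]$ grows with $\ell$, and its union over all $\ell$ is the set of all primes $\geq p_m$. Thus, if the primes $\geq p_m$ contain an arithmetic progression $\cb{a_0,\dots,a_t}$ of length $t$ with gap $k$, then its finitely many terms all lie in the image of $\Theta[\ell,m]$ once $\ell$ is large, so by the previous paragraph $d_{\Theta[\ell,m]}$ has a $k$-monochromatic $P_t$ for \emph{every} sufficiently large $\ell$, and the Ramsey number exists. Conversely, if $R_{(\mbb{G},\mcal{S}(m))}\cp{P_t,k_{P_t}}=n$ exists then necessarily $n\geq t$ (for $\ell<t$ one has $K_{\ell+1}/P_t=\emptyset$, so the condition fails), and taking $\ell=n$ in the previous paragraph extracts an arithmetic progression of length $t$ with gap $k$ among the primes $\geq p_m$. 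The remaining assertions about the type of this Ramsey number then fall out of the definitions: each $A_{(i,m)}$ is finite, $A_{(i,m)}\subseteq A_{(t,m)}$ whenever $i\leq t$, and the symbol $(\mbb{X}_i,C_i)$ is independent of $i$ up to those inclusions, so the base is of locally finite type and the symbol is uniform of locally finite type.

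I do not anticipate a genuine obstacle. The only real idea is the ``no back-tracking'' remark of the second paragraph: a monochromatic path in a metrical coloring cannot zig-zag, because its vertices, being distinct, would otherwise be forced to repeat a value, so it is automatically an honest arithmetic progression. Everything else is bookkeeping --- matching the $\forall\,\ell\geq n$ shape of Definition~\ref{GRN} against a one-level condition, checking that $P_t\leq K_{\ell+1}$ and $k\in A_{(\ell,m)}$ for the levels that matter, and reading the constraint on $\Theta[i,m]$ as pinning its image down to the first $\ell+1$ primes $\geq p_m$ (not merely containing it in a prescribed set), which is the interpretation under which ``an arithmetic progression inside the image'' is literally ``a bounded-size arithmetic progression among the primes $\geq p_m$''. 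That last reading is the one point a careful reader may wish to see spelled out.
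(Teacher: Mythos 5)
Your proposal is correct and follows essentially the same route as the paper: unwind Definition~\ref{GRN} for the singleton coloring sets $S_i(m)$, identify a $k$-monochromatic copy of $P_t$ in the metrical coloring $d_{\Theta[\ell,m]}$ with an arithmetic progression of gap $k$ inside the image of $\Theta[\ell,m]$, and rule out back-tracking along the path. If anything, your write-up is more complete than the paper's: the printed proof only argues the direction from the Ramsey number to the progression (and justifies the no-zig-zag step with the looser remark that each integer has exactly two integers at distance $k$, where your injectivity argument is cleaner), whereas you also supply the converse direction explicitly via the monotone exhaustion of the primes $\geq p_m$ by the images of the $\Theta[\ell,m]$.
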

\begin{proof}
    The existence of the Ramsey number $R_{{(\mbb{G},\mcal{S}(m))}}\cp{P_t,k_{P_t}}$ is equivalent, by definition, to the existence of $n \in \N$ such that $n$ is the least element of $\N$ such that for all $r \geq n$ there exists $\pi \in G_{r+1}/P_t$ such that $\restr{d_{\Theta[r,m]}}{\pi^{-1}(P_t)}=k_{P_t} \circ \pi$.
    
    But, this then says that $d_{\Theta[r,m]}(e) = k$ for all $e \in \pi^{-1}(P_t)$.  That is, take the subgraph $H$ of $K_{r+1}$ induced by the edge set $\pi^{-1}(P_t) \subseteq E(K_{r+1})$ and choose the vertex $v_1 \in V(H)$ such that $v_1$ is of degree $1$ and $\Theta[r](v_1)<\Theta[r](w)$ for any other $w \in V(H)$ of degree $1$.  Let $e_1$ denote the edge incident to $v_1$ and inductively label the edges and vertices of $H$ so that the edge $e_i$ is incident to the vertices $v_{i}$ and $v_{i+1}$.  Then, we have ${d_{\Theta[r,m]}(e_i) = d\cp{\Theta(v_i),\Theta(v_{i+1})} = k}$.  Since $\pi^{-1}(P_t)$ is a path in $K_{r+1}$, this then yields a sequence of $t+1$ primes corresponding to the $t+1$ vertices in $K_{r+1}$ incident to the edges belong to $\pi^{-1}(P_t)$ which are successively of distance $k$ and all greater than or equal to $p_m$.  As there are exactly two integers of distance $k$ from any given integer, and as $\Theta[r,m](v_1) < \Theta[r,m](v_{t+1})$, it then follows that $\Theta[r,m](v_1),\Theta[r,m](v_1)+k,\hdots,\Theta[r,m](v_1)+tk$ is an arithmetic progression of length $t$ with gap $k$ in the primes greater than or equal to $p_m$.
\end{proof}

Theorem~\ref{ramsey gap k length t+1 AP} obviously admits an interpretation in terms of the Green-Tao Theorem.

\begin{corollary}\label{GT ramsey}
    Let $\mbb{G}$ and $\mcal{S}(m)$ be as in Theorem~\ref{ramsey gap k length t+1 AP} and, for a fixed $t \in \N$, set $\mbb{X}(t)=\cb{\mbb{X}_i(t)}_{i \in \N}$ and $\mcal{C}(t)=\cb{C_i(t)}_{i\in \N}$, where $\mbb{X}_i(t)=\cb{(P_t)_j}_{0 <j <i}$ and $\mcal{C}_i(t)=\cb{j_{(P_t)_j}}_{0<j<i}$.  The Ramsey numbers $R_{(\mbb{G},\mcal{S}(1))}\cp{\mbb{X}(t),\mcal{C}(t)}$ exist for all values of $t \in \N$.
\end{corollary}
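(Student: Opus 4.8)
The plan is to read this off from the Green--Tao Theorem by way of Theorem~\ref{ramsey gap k length t+1 AP}, exploiting that the symbol $(\mbb{X}(t),\mcal{C}(t))$ is, at every level past the gap of a fixed witnessing progression, merely a logical weakening of a single-target symbol of the kind that theorem already handles. So nothing new about the primes is needed; the work is in matching up the two Ramsey conditions.

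First I would fix $t\in\N$ (the case $t=0$ being trivial, since $P_0$ is edgeless). Green--Tao supplies an arithmetic progression of length $t$ in the primes; let $k=k(t)\ge 1$ be its gap. Every prime is at least $p_1=2$, so this progression lies in the primes $\ge p_1$, and Theorem~\ref{ramsey gap k length t+1 AP} with $m=1$ then asserts that the Ramsey number $R_{(\mbb{G},\mcal{S}(1))}\cp{P_t,k_{P_t}}$ exists; write $N_t$ for its value. (This is precisely why the corollary is phrased with $\mcal{S}(1)$: for $m>1$ one would additionally need the progression to avoid the first $m-1$ primes, which Green--Tao does not directly give.)

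Next I would compare the two conditions straight from Definition~\ref{GRN}, since one cannot simply cite Lemma~\ref{gal type symbol extend} here: $(\mbb{X}(t),\mcal{C}(t))$ is not uniform and the base $(\mbb{G},\mcal{S}(1))$ is not of finite type. Unwinding $R_{(\mbb{G},\mcal{S}(1))}\cp{P_t,k_{P_t}}=N_t$: for every level $r\ge N_t$ the unique admissible coloring $\rho_r=d_{\Theta[r,1]}\in S_r(1)$ of $G_r$ admits some $\pi\in G_r/P_t$ with $\restr{\rho_r}{\pi^{-1}(P_t)}=k_{P_t}\circ\pi$. But at level $i$ the target family of $(\mbb{X}(t),\mcal{C}(t))$ is $\cb{(P_t)_j}_{0<j<i}$ with colorings $\cb{j_{(P_t)_j}}_{0<j<i}$, so once $i>k$ the pair $\bigl((P_t)_k,\,k_{(P_t)_k}\bigr)$ occurs among them. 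Hence for every $r\ge\max(N_t,k+1)$, taking the symbol index $j=k$ and the same $\pi$ (using $G_r/(P_t)_k=G_r/P_t$) gives $\restr{\rho_r}{\pi^{-1}((P_t)_k)}=k_{(P_t)_k}\circ\pi$, which is exactly what Definition~\ref{GRN} demands of the symbol $(\mbb{X}(t),\mcal{C}(t))$ at level $r$. Since $S_r(1)$ is a singleton, this means every admissible coloring at level $r$ meets the requirement for all $r\ge\max(N_t,k(t)+1)$, so $R_{(\mbb{G},\mcal{S}(1))}\cp{\mbb{X}(t),\mcal{C}(t)}$ exists and is at most $\max(N_t,k(t)+1)$. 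As $t$ was arbitrary, this proves the corollary.

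The only genuine subtlety is in that last step: the passage from the one-target Ramsey number to the many-target one is valid only level-by-level, and only at levels exceeding $k(t)$ so that the color $k(t)$ is actually present in the symbol, which is why the bound comes out as $\max(N_t,k(t)+1)$ rather than $N_t$. Everything substantive is carried by the invocation of Green--Tao. I would close with the one-line remark that the converse also holds --- existence of $R_{(\mbb{G},\mcal{S}(1))}\cp{\mbb{X}(t),\mcal{C}(t)}$ forces, at any fixed sufficiently large level, a monochromatic copy of $P_t$ and hence a length-$t$ arithmetic progression of primes --- so the statement ``these Ramsey numbers exist for all $t$'' is genuinely \emph{equivalent} to Green--Tao, not merely a consequence of it.
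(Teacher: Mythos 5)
Your proof is correct and follows the same route the paper intends: invoke Green--Tao to obtain a length-$t$ progression with some gap $k$, apply Theorem~\ref{ramsey gap k length t+1 AP} with $m=1$ to get existence of $R_{(\mbb{G},\mcal{S}(1))}\cp{P_t,k_{P_t}}$, and observe that the pair $\cp{(P_t)_k,\,k_{(P_t)_k}}$ appears in the symbol $(\mbb{X}(t),\mcal{C}(t))$ at every level $i>k$, so the same witnessing $\pi$ works there. The paper states the corollary without an explicit proof, and your level-by-level bookkeeping (including the $\max(N_t,k(t)+1)$ bound and the remark on the converse) is a faithful, slightly more careful filling-in of that same argument.
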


Let's take a look at another example of this phenomenon in the context of a conjecture that is, as of yet, unproven.

\begin{conjecture}[Twin Prime Conjecture]
    There are infinitely many values $n\in \N$ such that $p_{n+1} - p_n = 2$.
\end{conjecture}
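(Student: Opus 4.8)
The Twin Prime Conjecture is, of course, a famous open problem, so any honest plan must concede at the outset that no complete argument is presently available; what the framework of this paper supplies is a sharpened target rather than a resolution. The plan is first to specialize Theorem~\ref{ramsey gap k length t+1 AP} to the case $t=1$, $k=2$. Since $P_1 = K_2$ is a single edge, and since any two primes differing by $2$ are automatically consecutive primes (the integer between them being even and greater than $2$, hence composite), the existence of the Ramsey number $R_{(\mbb{G},\mcal{S}(m))}\cp{P_1, 2_{P_1}}$ for a fixed $m$ is equivalent to the existence of a twin prime pair both of whose members exceed $p_m$. Quantifying over $m$, the assertion that infinitely many $n$ satisfy $p_{n+1}-p_n=2$ becomes the statement that the entire family $\cb{R_{(\mbb{G},\mcal{S}(m))}\cp{P_1, 2_{P_1}}}_{m \in \N}$ exists: there is no finite cutoff $p_m$ beyond which twins cease to occur.

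With the target pinned down, the substantive work is to prove that \emph{every} member of this family exists, which is a purely number-theoretic statement about the fine distribution of prime gaps. Here I would follow the sieve-theoretic program of Goldston--Pintz--Y{\i}ld{\i}r{\i}m, as sharpened by Zhang and by Maynard--Tao: fix an admissible $r$-tuple $\mcal{H} = (h_1, \ldots, h_r)$ and weight the integers $n$ by a Selberg-type sieve weight tuned so that the expected number of primes among $n+h_1,\ldots,n+h_r$ exceeds one. A successful execution produces two primes within a bounded distance infinitely often, which in the present language asserts precisely the existence of $R_{(\mbb{G},\mcal{S}(m))}\cp{P_1, k_{P_1}}$ for all $m$ and some \emph{fixed} bounded even gap $k$. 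To recover the conjecture as worded, one must then force that bounded gap all the way down to $k=2$.

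The \textbf{hard part}, and the reason this final step cannot presently be completed, is exactly this forcing. Zhang's theorem yields $\liminf_n (p_{n+1}-p_n) \le 7\times 10^{7}$ unconditionally, the Maynard--Tao method improves this to $\le 246$ unconditionally and to $\le 12$ (or $\le 6$) under Elliott--Halberstam-type hypotheses on primes in arithmetic progressions, but the parity obstruction intrinsic to sieve methods prevents any of these from certifying that the winning gap is $2$ rather than some larger even value. In the Ramsey language, the sieve guarantees that some $R_{(\mbb{G},\mcal{S}(m))}\cp{P_1, k_{P_1}}$ with $k \le 246$ exists for every $m$, yet it cannot single out the color $k=2$ from the other admissible even colors, and so it does not establish existence of the particular family indexed by $k=2$. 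I would therefore expect the crux of any eventual proof to lie not in the Ramsey reformulation, which merely repackages the question, but in a genuinely new analytic or combinatorial input that defeats the parity problem and isolates the single color $2$; the reduction above serves chiefly to locate precisely where that missing input must be inserted.
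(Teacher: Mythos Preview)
The statement you were asked to address is labeled a \emph{conjecture} in the paper, and the paper does not attempt to prove it; it merely records the equivalence in Theorem~\ref{twin prime} and moves on. You correctly recognize this, and your reduction---specializing Theorem~\ref{ramsey gap k length t+1 AP} to $t=1$, $k=2$, noting that $P_1=K_2$, observing that two primes at distance $2$ are automatically consecutive, and then quantifying over $m$---is exactly the content of the paper's Theorem~\ref{twin prime}. So on the part where a comparison is possible, your argument and the paper's coincide.

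Your extended discussion of the GPY/Zhang/Maynard--Tao sieve program and the parity obstruction goes well beyond anything the paper does; the paper offers no analytic attack whatsoever, only the reformulation. That extra material is accurate and well placed as context, but it is not a divergence in proof strategy so much as an acknowledgment---shared with the paper---that no proof currently exists, together with an honest accounting of why.
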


It is easy to see that the Twin Prime conjecture is equivalent to the existence of a family of Ramsey numbers which may be constructed in virtually the same manner as the Green-Tao theorem.

\begin{theorem}\label{twin prime}
    Let $\mbb{G}$ and $\mcal{S}(m)$ be as in Theorem~\ref{ramsey gap k length t+1 AP}.  The existence of the of the Ramsey numbers $R_{\cp{\mbb{G},\mcal{S}(m)}}\cp{\ncb{K_2},\ncb{2_{K_2}}}$ for all values of $m \in \N$ is equivalent to the Twin Prime conjecture.
\end{theorem}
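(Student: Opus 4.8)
The plan is to obtain Theorem~\ref{twin prime} as a direct specialization of Theorem~\ref{ramsey gap k length t+1 AP} to the shortest possible progression. Recall that $P_1 = K_2$ and that, since $K_2$ has a single edge, the constant coloring $2_{P_1}$ coincides with $2_{K_2}$; hence $R_{(\mbb{G},\mcal{S}(m))}\cp{P_1,2_{P_1}}$ is literally the Ramsey number $R_{(\mbb{G},\mcal{S}(m))}\cp{\ncb{K_2},\ncb{2_{K_2}}}$ appearing in the statement. I would then invoke Theorem~\ref{ramsey gap k length t+1 AP} with $t=1$ and $k=2$: for each fixed $m \in \N$, it gives that $R_{(\mbb{G},\mcal{S}(m))}\cp{\ncb{K_2},\ncb{2_{K_2}}}$ exists if and only if there is an arithmetic progression of length $1$ with gap $2$ among the primes $\geq p_m$, i.e. if and only if there exist primes $a$ and $a+2$ with $a \geq p_m$.

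Next I would record the elementary fact that a pair of primes differing by $2$ is automatically a pair of consecutive primes: if $a$ and $a+2$ are both prime with $a \geq 3$, the only integer strictly between them is $a+1$, which is even and exceeds $2$, hence composite, and the case $a = 2$ does not occur since $4$ is not prime. Therefore, for each $m$, the existence of $R_{(\mbb{G},\mcal{S}(m))}\cp{\ncb{K_2},\ncb{2_{K_2}}}$ is equivalent to the existence of an index $n$ with $p_n \geq p_m$ and $p_{n+1}-p_n = 2$. Quantifying over $m$: if these Ramsey numbers exist for all $m \in \N$, then there are twin prime pairs with arbitrarily large smaller element, hence infinitely many $n$ with $p_{n+1}-p_n=2$, which is the Twin Prime conjecture; conversely, if there are infinitely many such $n$, then for every $m$ one of them satisfies $p_n \geq p_m$, so every $R_{(\mbb{G},\mcal{S}(m))}\cp{\ncb{K_2},\ncb{2_{K_2}}}$ exists. (Equivalently, if the conjecture fails there is a largest twin prime pair, and for any $m$ with $p_m$ exceeding its smaller member the corresponding Ramsey number does not exist, so existence for a fixed $m$ alone is genuinely weaker than the full conjecture.)

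There is no substantial obstacle here; essentially all the work is already contained in Theorem~\ref{ramsey gap k length t+1 AP}. The only points meriting care are (i) checking that the data $\mbb{G}$, $\Theta[i,m]$, $A_{(i,m)}$ and $\mcal{S}(m)$ in the present statement are exactly those of Theorem~\ref{ramsey gap k length t+1 AP}, so that the cited equivalence applies verbatim with $t=1$, $k=2$; and (ii) the passage from ``a twin prime pair lies among the primes $\geq p_m$'' to ``the Ramsey number exists'', which relies on the blocks of primes encoded by the colorings $d_{\Theta[i,m]}$ being nested and exhausting all primes $\geq p_m$, so that once an edge of color $2$ appears at some level it appears at all higher levels -- this is precisely what makes the ``least $n$'' clause of Definition~\ref{GRN} meaningful in this setting. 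If a self-contained argument were preferred, the proof of Theorem~\ref{ramsey gap k length t+1 AP} transcribes word for word with $P_t$ replaced by the single edge $K_2$.
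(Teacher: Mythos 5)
Your proposal is correct and matches the paper's intended argument: the paper gives no written proof of Theorem~\ref{twin prime}, only the remark that it follows in virtually the same manner as the Green--Tao construction, and your specialization of Theorem~\ref{ramsey gap k length t+1 AP} to $t=1$, $k=2$ (using $P_1=K_2$), together with the observation that primes differing by $2$ are consecutive and the quantification over $m$ converting ``a twin pair among the primes $\geq p_m$ for every $m$'' into ``infinitely many twin pairs,'' is exactly that argument. The care you take with the nestedness of the colorings $d_{\Theta[i,m]}$, ensuring the ``for all $t\geq n$'' clause of Definition~\ref{GRN} is satisfied once a twin pair appears, is a detail the paper glosses over but is handled correctly here.
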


Likewise, recall Zhang's landmark 2013 result \cite{zhang2014bounded} that established the existence of infinitely many prime pairs with common gap.

\begin{theorem}[Zhang]\label{zhang}
    For some some $N \in  \N$, there are infinitely many values $n \in \N$ such that $p_{n+1}-p_n = 2N$.
\end{theorem}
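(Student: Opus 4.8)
The plan is to deduce this from the form of Zhang's theorem actually proved in \cite{zhang2014bounded}, namely that there is a constant $B$ with $p_{n+1}-p_n \leq B$ for infinitely many $n$. For every $n$ with $p_n \geq 3$ the gap $p_{n+1}-p_n$ is even, so on the (infinite) set of such $n$ it takes values in the finite set $\cb{2,4,\dots,2\lfloor B/2 \rfloor}$; by the pigeonhole principle at least one value — call it $2N$ — is attained for infinitely many $n$, which is exactly the assertion. I would carry this out as a single line: invoke \cite{zhang2014bounded}, pass to the infinitely many gaps not exceeding $B$, note evenness, apply pigeonhole.

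In keeping with the rest of this section, the reason for recording the theorem in this form is that, like the Green--Tao theorem and the twin prime conjecture, it has a Ramsey-theoretic counterpart obtained by a verbatim rerun of the construction behind Theorem~\ref{twin prime}: keeping $\mbb{G}$ and $\mcal{S}(m)$ as in Theorem~\ref{ramsey gap k length t+1 AP} and replacing the target coloring $2_{K_2}$ by $(2N)_{K_2}$, one checks — exactly as in that proof, now reading off an edge rather than a path — that the Ramsey number $R_{\cp{\mbb{G},\mcal{S}(m)}}\cp{\ncb{K_2},\ncb{(2N)_{K_2}}}$ exists precisely when there is a pair of primes at distance $2N$ above $p_m$, so that its existence for all $m \in \N$ is equivalent to the occurrence of infinitely many prime pairs at distance $2N$. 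Zhang's theorem then supplies an $N$ for which this family of Ramsey numbers exists, since a consecutive gap of $2N$ occurring infinitely often a fortiori produces infinitely many prime pairs at distance $2N$.

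The only genuine obstacle is Zhang's theorem itself; its proof — the sieve method of Goldston, Pintz, and Yildirim together with Zhang's estimates for exponential sums over smooth moduli — lies entirely outside the scope of this paper, and nothing in the pigeonhole reduction or the Ramsey recasting is more than bookkeeping. The single point demanding care is that the metrical coloring $d_{\Theta[i,m]}$ records the distance between any two labeled primes, not merely between consecutive ones; consequently the equivalence above is tight only in the direction we actually use (Zhang $\Rightarrow$ existence of the Ramsey family), while the reverse implication yields only the weaker, Polignac-flavoured statement that primes differing by $2N$ occur infinitely often. This is why the clean ``if and only if'' of Theorem~\ref{twin prime} persists at gap $2$ — where distance $2$ already forces consecutiveness — but would have to be stated with this caveat for a general even gap.
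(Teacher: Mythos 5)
The paper offers no proof of this statement at all: it is recorded as a known external result and attributed to \cite{zhang2014bounded}, so there is nothing internal to compare your argument against. Your reduction is nevertheless the correct and standard way to pass from the form Zhang actually proved (infinitely many $n$ with $p_{n+1}-p_n\leq B$ for an explicit $B$) to the form stated here: the gaps in question are even once $p_n\geq 3$, they lie in a finite set, and pigeonhole extracts a single value $2N$ attained infinitely often. That one-line bookkeeping is all the statement requires beyond the citation.

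Everything in your second and third paragraphs about the Ramsey-theoretic recasting is not part of proving this theorem; it belongs to the paper's Theorems~\ref{polignac} and~\ref{zhang ramsey}. Your caveat there is worth noting, though: a verbatim rerun of Theorem~\ref{twin prime} with target coloring $(2N)_{K_2}$ and admissible set $\mcal{S}(m)$ detects \emph{any} pair of labeled primes at distance $2N$, not only consecutive ones, which is exactly why the paper's Theorem~\ref{polignac} switches to the restricted coloring sets $\mcal{L}(m)$ built on the path $\sigma_i^{-1}(P_i)$ of consecutive primes rather than reusing $\mcal{S}(m)$ directly. So your observation that the clean equivalence at gap $2$ does not transfer verbatim to general even gaps is accurate, and is precisely the issue the paper's later construction is designed to avoid.
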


\noindent As is well-known, Zhang's remarkable proof verifies a particular case of Polignac's conjecture \cite{tattersall2005elementary}.

\begin{conjecture}[Polignac]
    For every $m\in N$ there are infinitely many values $n\in N$ such that $p_{n+1}-p_n = 2m$.
\end{conjecture}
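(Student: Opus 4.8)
The plan is to address Polignac's conjecture on its own number-theoretic terms --- as the assertion that, for each fixed even integer $2m$, the difference $2m$ is realized infinitely often as a gap $p_{n+1}-p_n$ between consecutive primes --- rather than to trade it for an equivalent statement about a family of Ramsey numbers. Fix the target gap $2m$. The natural machinery is the theory of admissible tuples and the modern weighted sieve: a $k$-tuple $(h_1,\dots,h_k)$ of distinct integers is admissible when, for every prime $p$, it fails to cover all residue classes modulo $p$, and the governing question is how many of the shifted values $n+h_i$ can be simultaneously prime for infinitely many $n$. First I would assemble a Goldston--Pintz--Y{\i}ld{\i}r{\i}m--type weight $w(n)$, nonnegative and concentrated on those $n$ for which many $n+h_i$ are prime, and compare the weighted prime count across an admissible tuple against the weighted count of all $n$ in a dyadic interval.

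Second, I would bring in the inputs that are already available: Zhang's theorem (Theorem~\ref{zhang}), which guarantees that \emph{some} fixed even gap recurs infinitely often, together with the subsequent multidimensional (Maynard--Tao) sieve, which shows that for every $k$ some admissible $k$-tuple captures at least two primes infinitely often within an absolutely bounded diameter $H$. From any such statement one extracts, by pigeonhole over the finitely many even values in $[2,H]$, that at least one even number $2m_0\le H$ is a Polignac number. The third and decisive step would be to \emph{promote} this existential conclusion --- ``some even gap below $H$ occurs infinitely often'' --- to the universal conclusion that \emph{every} prescribed even gap $2m$ does.

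The main obstacle is precisely this final promotion, and it is exactly the point at which Polignac's conjecture resists all present techniques: the sieve is inherently non-constructive about \emph{which} difference recurs, yielding a pigeonhole guarantee without any control over the realized gap, so it cannot isolate a prescribed value $2m$. Closing this step would demand either a positive-density lower bound for the specific gap $2m$ or a genuinely new equidistribution input --- an Elliott--Halberstam estimate strong enough to force primality at a single prescribed shift --- neither of which is currently in reach. Within this paper's framework, this same obstruction is what blocks a verification that the full family of Ramsey numbers extending Theorem~\ref{twin prime} (with the gap $2$ replaced by $2m$) exists for every $m$, even though Zhang's result already secures at least one member of that family; thus the conjecture as stated cannot, as far as I can see, be settled here by the algebraic apparatus developed above.
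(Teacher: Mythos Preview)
You have correctly identified that Polignac's conjecture is an open problem and cannot be proved with present-day sieve technology; your analysis of the obstruction --- that the Maynard--Tao/Zhang machinery yields only an existential conclusion via pigeonhole, with no control over which specific gap recurs --- is accurate. But there is nothing to compare against: the paper does not offer a proof of this statement at all. It is recorded purely as a \emph{conjecture}, and the paper's only use of it is in the subsequent Theorem~\ref{polignac}, which reformulates Polignac's assertion as the existence of a certain family of Ramsey numbers $R_{(\mathbb{G},\mathcal{L}(m))}(\mathbb{X},\mathcal{W}(t))$. No attempt is made, or claimed, to establish the conjecture itself; your closing paragraph already anticipates this correctly.
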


Naturally, the case $m=1$ in Polignac's conjecture yields the Twin Prime conjecture, and Zhang's result shows that a weakening of Polignac's conjecture holds for some value $m$.  It is somewhat more involved than the constructions used above (but still, ultimately not too difficult) to find some families of Ramsey numbers whose existence is equivalent to Polignac's conjecture.  The following formulation will suffice for our purposes. 

\begin{theorem}\label{polignac}
    Let $\mbb{G}$ and $\Theta[i,m]$ be as in Theorem~\ref{ramsey gap k length t+1 AP} and set $\mbb{X}=\ncb{K_2}$ and $\mcal{W}(t)=\ncb{(2t)_{K_2}}$.  For each $i \in \N$, fix $\sigma_i \in K_{i+1}/P_i$ such that the subgraph of $K_{i+1}$ induced by the edge set $\sigma_i^{-1}(P_i)$ has  $\Theta[i,m]^{-1}(p_{m+r})$ adjacent to $\Theta[i,m]^{-1}(p_{m+r+1})$ for every $r \in \ncb{0,\hdots,i}$.  Let $L_i(m) \subseteq \textrm{Hom}(K_{i+1},A_{(i,m)})$ denote the set of edge-colorings $\eta$ that satisfy $\restr{\eta}{\sigma_i^{-1}(P_i)} = d_{\Theta[i,m]}$.  Then, setting $\mcal{L}(m) = \ncb{L_i(m)}_{i\in \N}$, the existence of the Ramsey numbers $R_{\cp{\mbb{G},\mcal{L}(m)}}\cp{\mbb{X},\mcal{W}(t)}$ for all values of $t,m \in \N$ is equivalent to Polignac's conjecture.
\end{theorem}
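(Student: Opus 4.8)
The plan is to unwind Definition~\ref{GRN} for each individual Ramsey number $R_{\cp{\mbb{G},\mcal{L}(m)}}\cp{\mbb{X},\mcal{W}(t)}$ --- a Ramsey number with a base of locally finite type --- reduce its existence to an elementary statement about consecutive prime gaps, and then obtain Polignac's conjecture by quantifying over $t$ and $m$. First, since $\mbb{X}=\ncb{K_2}$ and the target coloring is the constant $(2t)_{K_2}$, an element $\pi\in K_{r+1}/K_2$ is nothing but a choice of edge of $K_{r+1}$ (its orientation being immaterial here), and the condition $\restr{\rho}{\pi^{-1}(K_2)}=(2t)_{K_2}\circ\pi$ simply says that $\rho$ assigns $2t$ to that edge. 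Hence, directly from Definition~\ref{GRN}, $R_{\cp{\mbb{G},\mcal{L}(m)}}\cp{\mbb{X},\mcal{W}(t)}$ exists if and only if there is some $n\in\N$ such that for every $r\geq n$, every coloring $\rho\in L_r(m)$ assigns $2t$ to at least one edge of $K_{r+1}$.

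Next I would analyze $L_r(m)$: by construction $\rho\in L_r(m)$ precisely when $\rho$ agrees with $d_{\Theta[r,m]}$ along the Hamiltonian path $\sigma_r^{-1}(P_r)$, whose consecutive vertices carry consecutive primes starting from $p_m$, so that the edges of this path receive exactly the consecutive prime gaps while the remaining (``chord'') edges of $K_{r+1}$ may independently take any value in $A_{(r,m)}$. The crux is then: for all sufficiently large $r$ --- so that $2t<p_{m+r+1}$, hence $2t\in A_{(r,m)}$, and so that $K_{r+1}$ has a chord, i.e. $r\geq 2$ --- every $\rho\in L_r(m)$ has an edge colored $2t$ if and only if some path edge is colored $2t$, i.e. if and only if two consecutive primes among those labeling $V(K_{r+1})$ differ by exactly $2t$. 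The one nontrivial implication is that, if no path edge is colored $2t$, then coloring every chord with a fixed element of $A_{(r,m)}\setminus\ncb{2t}$ --- nonempty, since it contains $0$ or $1$ --- produces a $\rho\in L_r(m)$ with no edge colored $2t$; the converse implications are immediate, a path edge colored $2t$ being literally a consecutive prime gap equal to $2t$.

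Since such a gap, once it appears among the primes labeling $V(K_{r+1})$, persists for all larger $r$, and only large $r$ matter for existence, this shows $R_{\cp{\mbb{G},\mcal{L}(m)}}\cp{\mbb{X},\mcal{W}(t)}$ exists if and only if there is some $s\geq 0$ with $p_{m+s+1}-p_{m+s}=2t$, i.e. if and only if a prime gap of size $2t$ occurs at or after $p_m$. Fixing $t$, the statement ``$R_{\cp{\mbb{G},\mcal{L}(m)}}\cp{\mbb{X},\mcal{W}(t)}$ exists for all $m\in\N$'' is therefore equivalent to ``for every $m$ there is a prime gap $2t$ at or beyond $p_m$,'' which, by a routine induction, is equivalent to ``there are infinitely many $n$ with $p_{n+1}-p_n=2t$.'' Quantifying over $t$ as well then yields precisely Polignac's conjecture, with $t$ in the role of its gap parameter; in particular $t=1$ recovers Theorem~\ref{twin prime}. (One could instead route the existence criterion through Theorem~\ref{ramsey characterization locally finite}, but it would reduce to the same combinatorial statement here.)

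The only genuinely delicate point is the crux in the second paragraph --- specifically, the verification that the chord edges are truly unconstrained in $L_r(m)$, so that the color $2t$ can be forced only along a consecutive-prime path edge. This is exactly why the base is built from $\mcal{L}(m)$, colorings forced to agree with the true distances only along the consecutive-prime path, rather than from the full metrical colorings $d_{\Theta[r,m]}$: the latter would also detect non-consecutive prime pairs at distance $2t$ and hence would encode a strictly weaker arithmetic assertion.
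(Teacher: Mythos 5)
Your proposal is correct and follows essentially the same route as the paper's proof: the key step in both is to exhibit, for each admissible coloring class $L_r(m)$, the adversarial coloring that agrees with $d_{\Theta[r,m]}$ on the consecutive-prime path and is constant with a value in $A_{(r,m)}\setminus\ncb{2t}$ on all chords, so that an edge of color $2t$ can only be forced by a consecutive prime gap equal to $2t$. Your write-up is in fact more complete than the paper's, which compresses the persistence argument and the quantifier bookkeeping over $t$ and $m$ into ``the desired conclusion follows immediately.''
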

\begin{proof}
    To begin, note that for any choice of $m,t \in \N$, the definition of $L_i(m)$ yields that there exists a coloring $\eta \in L_i(m)$ that satisfies 
    
    $$\restr{\eta}{E(K_{i+1})\setminus \sigma_i^{-1}(P_i)} = \restr{r_{K_{i+1}}}{E(K_{i+1})\setminus \sigma_i^{-1}(P_i)}$$ 
    
    \noindent for any $r \in A_{(i,m)}$.  If $A_{(i,m)}=\ncb{2t}$, then we are done.  If not, then by choosing $r \neq 2t$, it follows that if there exists $\pi \in K_{i+1}/K_2$ such that $\restr{\eta}{\pi^{-1}(K_2)} = (2t)_{K_2} \circ \pi$, then $\pi^{-1}(K_2) \subseteq \sigma_i^{-1}(P_i)$.  So, since $\restr{\eta}{\sigma_i^{-1}(P_i)}=d_{\Theta[i,m]}$, the desired conclusion follows immediately.
\end{proof}

It's obvious that other related conjectures can also be posed in a Ramseyain form with only a small degree of effort.  Per Theorem~\ref{polignac}, it's easy to see that Zhang's result is no exception, and is equivalent to the following Ramsey-theoretic statement:

\begin{theorem}\label{zhang ramsey}
    For some $N\in \N$, the Ramsey numbers $R_{\cp{\mbb{G},\mcal{L}(m)}}\cp{\mbb{X},\mcal{W}(N)}$ as in Theorem~\ref{polignac} exist for every $m \in \N$.
\end{theorem}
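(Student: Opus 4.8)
The plan is to obtain Theorem~\ref{zhang ramsey} as an immediate consequence of Zhang's Theorem~\ref{zhang} once the equivalence underlying Theorem~\ref{polignac} is read ``one gap value at a time.'' The first step is to isolate from the proof of Theorem~\ref{polignac} the refinement it actually establishes: for each fixed $t \in \N$, the Ramsey numbers $R_{\cp{\mbb{G},\mcal{L}(m)}}\cp{\mbb{X},\mcal{W}(t)}$ exist for all $m \in \N$ if and only if there are infinitely many $n \in \N$ with $p_{n+1}-p_n = 2t$. Polignac's conjecture is precisely the conjunction of these statements over all $t$, so Theorem~\ref{polignac} follows from the refinement; conversely, nothing in the proof of Theorem~\ref{polignac} couples distinct values of $t$, so the refinement is available to us for the single value $t = N$.

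Granting this, the deduction is short. First I would invoke Theorem~\ref{zhang}: there is an $N \in \N$ for which the set $\cb{n \in \N : p_{n+1}-p_n = 2N}$ is infinite. Then I would run the ``$\Leftarrow$'' direction of the refinement with $t = N$ and with $m$ arbitrary. Infinitude of the gap-$2N$ pairs yields, for this $m$, a pair of consecutive primes $p_{n_0}, p_{n_0+1}$ with $p_{n_0} \geq p_m$ and $p_{n_0+1}-p_{n_0}=2N$; writing $n_0 = m+s$, for every $i \geq s+1$ both $p_{n_0}$ and $p_{n_0+1}$ lie in the vertex block of $K_{i+1}$, remain adjacent along the fixed Hamiltonian path $\sigma_i^{-1}(P_i)$, and so the edge joining them is colored $p_{n_0+1}-p_{n_0}=2N$ by $d_{\Theta[i,m]}$. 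Since every $\eta \in L_i(m)$ agrees with $d_{\Theta[i,m]}$ on $\sigma_i^{-1}(P_i)$, every such $\eta$ contains a $K_2$ monochromatic in color $2N$; hence $R_{\cp{\mbb{G},\mcal{L}(m)}}\cp{\mbb{X},\mcal{W}(N)}$ exists (and is at most $s+2$). As $m$ was arbitrary, the same $N$ works for every $m$, which is the assertion. The reverse implication, giving a genuine equivalence with Theorem~\ref{zhang}, would come from the ``$\Rightarrow$'' half of the refinement exactly as in the proof of Theorem~\ref{polignac}, but is not needed for the statement as phrased.

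The only real care-point --- the ``main obstacle,'' such as it is --- is the bookkeeping that makes the ``for all $m$'' quantifier encode ``infinitely many $n$,'' together with the verification that the witnessing edge, once it appears, persists for \emph{all} larger $i$ and not merely infinitely many of them (Definition~\ref{GRN} demands the subcoloring for \emph{any} $t \geq n$). Both are monotonicity observations: deleting the finitely many primes below $p_m$ cannot destroy infinitude, and within a fixed $m$ the vertex block of $K_{i+1}$ only grows with $i$ while the two relevant primes stay consecutive in the prime order, so the edge never leaves $\sigma_i^{-1}(P_i)$. I would also normalize the minor index shifts in Theorem~\ref{ramsey gap k length t+1 AP} and Theorem~\ref{polignac} (between $K_{i+1}$, $P_i$, and the block $\cb{p_m,\hdots,p_{m+i}}$) so that the counting above is unambiguous; no algebraic input beyond Theorem~\ref{polignac} and Theorem~\ref{zhang} enters.
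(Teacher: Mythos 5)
Your proposal is correct and follows essentially the same route the paper intends: the paper offers no explicit proof of Theorem~\ref{zhang ramsey}, deriving it directly from the single-gap-value reading of Theorem~\ref{polignac} combined with Theorem~\ref{zhang}, which is precisely your refinement-plus-Zhang deduction. Your write-up is in fact more careful than the paper's one-line justification, since it verifies persistence of the witnessing edge for all larger $i$ and flags the index-shift normalization between $K_{i+1}$ and the prime block.
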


On one hand, this demonstrates that determining whether or not a Ramsey number $R_{(\mathbb{G},\mathcal{S})}(\mathbb{X},\mathcal{C})$ exists is, at least, as hard as the Twin Prime conjecture or Polignac's conjecture.  On the other hand, it is not entirely outlandish to imagine that there might be at least a slight possibility that some useful insight into problems of this nature could be gained by this sort of a conversion--perhaps by generating upper bounds on some transformed Ramseyian formulation of the problem.  Regardless, it certainly shows that the version of Ramsey numbers addressed in this paper capture more than the classical or generalized forms of Ramsey numbers that are often studied.  Complicated, highly nontrivial nonclassical Ramsey numbers do indeed exist.

\section{Concluding Remarks}\label{sec7}

In this paper, we have developed a significantly more general notion of classical Ramsey numbers (extending most other generalizations) and performed some basic characterizations of them using a few simple algebraic tools.  This generalization allowed a better vantage to take stock of various Ramseyian problems and notice some similarities to Galois connections that hint that a better, less-cumbersome formulation might be possible.  To conclude, it was demonstrated that the Green-Tao Theorem, the Twin Prime conjecture, Zhang's bounded prime gap theorem, and Polignac's conjecture can be viewed as statements about Ramsey numbers.  This may offer potential avenues to explore Ramsey-theoretic interpretations of classical non-Ramseyian problems, and at least shows that the generalization of Ramsey numbers introduced captures a larger range of phenomena than one might initially expect.

\bibliographystyle{spmpsci} 
\bibliography{ref}  

%
\section*{Statements and Declarations}

\noindent \textbf{Conflict of interest statement}: The authors declare that they have no conflict of interest.

\noindent \textbf{Competing interests statement}: The authors have no relevant financial or non-financial interests to disclose.

\noindent \textbf{Funding statement}: The authors declare that no funds, grants, or other support were received during the preparation of this manuscript.

\noindent \textbf{Data availability statement}: This manuscript has no associated data.

\end{document}